\documentclass[12pt,reqno,oneside]{amsart}

\usepackage{amsmath}
\usepackage{amssymb}
\usepackage{amsthm}
\usepackage{graphics}
\usepackage{eucal}
\usepackage{bm}
\usepackage{bbm}
\usepackage[colorlinks]{hyperref}
\usepackage[capitalize,nameinlink,noabbrev]{cleveref}
\usepackage{mathrsfs}
\usepackage[normalem]{ulem}

\usepackage{color}
\usepackage[dvipsnames]{xcolor}

\usepackage{mathtools}
\usepackage{geometry}

\newtheorem{theorem}{Theorem}[section]
\newtheorem{definition}{Definition}[section]
\newtheorem{proposition}{Proposition}[section]
\newtheorem{lemma}{Lemma}[section]
\newtheorem{corollary}{Corollary}[section]

\newtheorem{remark}{Remark}[section]
\newtheorem{problem}{Problem}[section]

\newtheorem{dream theorem}{Dream theorem}[section]
\numberwithin{equation}{section}

\newcommand{\be}{\mathbf{e}}
\newcommand{\bu}{\mathbf{u}}

\newcommand{\SL}{\operatorname{SL}}
\newcommand{\diag}{\operatorname{diag}}

\newcommand{\supp}{\operatorname{supp}}

\newcommand{\bbZ}{\mathbb{Z}}

\newcommand{\bq}{\mathbf{q}}
\newcommand{\bx}{\mathbf{x}}
\newcommand{\bU}{\mathbf{U}}

\newcommand{\cL}{\mathcal{L}}

\newcommand{\bp}{\mathbf{p}}

\newcommand{\bv}{\mathbf{v}}
\newcommand{\bw}{\mathbf{w}}
\newcommand{\br}{\mathbf{r}}
\newcommand{\bad}{\mathbf{Bad}}

\newcommand{\bz}{\mathbf{z}}
\newcommand{\ls}[1]{{\color{RoyalBlue} \sf LS: (#1)}}

\newcommand{\ba}{\mathbf{a}}

\newcommand{\zblue}[1]{#1}

\newcommand{\by}{\mathbf{y}}

\newcommand{\Z}{\mathbb{Z}}
\newcommand{\R}{\mathbb{R}}

\newcommand{\sd}[1]{{\color{RubineRed} \sf SD: (#1)}}

\usepackage{mathtools}
\usepackage{tikz-cd}
\usepackage{graphicx}

\begin{document}
\title{Hyperplane winning for bad in manifolds}

\author{Shreyasi Datta}
\address[Shreyasi Datta]{TSMU, ISI Bangalore, India}
\email{shreyasi@isibang.ac.in, shreyasi1992datta@gmail.com}
\author{Liyang Shao}
\address[Liyang Shao]{Department of Mathematics, University of California-Berkeley, Berkeley, USA}
\email{liyang\_shao@berkeley.edu}

\date{}
\maketitle
\begin{abstract}
    We prove the hyperplane winning property of weighted badly approximable vectors inside nondegenerate analytic manifolds and affine subspaces satisfying almost optimal Diophantine conditions.  This, in particular, answers a conjecture of Badziahin--Velani for higher-dimensional manifolds, including affine cases that were understood only in $\R^2$.   
\end{abstract}

\section{Introduction}\label{sec: intro}

The goal of this paper is to explore \textit{largeness} in the sense of Schmidt for weighted badly approximable vectors inside affine subspaces and nondegenerate manifolds in $\R^n$. Weighted badly approximable vectors, as a natural higher-dimensional analogue of badly approximable numbers, are defined as follows. Given a weight vector $\bw=(w_1,\cdots,w_n)$ with positive entries and $\sum_{i=1}^nw_i=1,$ 
\[
\mathbf{Bad}(\bw):=\{\bx\in\R^n:\inf_{q\in\mathbb Z\setminus\{0\}, (p_i)\in \Z^n}|q|\max_{i=1,\cdots,n}\vert qx_i+p_i\vert^{1/w_i}>0\}.
\]

$\mathbf{Bad}(\bw)$ and its intersection with manifolds have been extensively studied over decades. Indeed, $\mathbf{Bad}(\bw)$  possesses interesting properties: it is small with respect to Lebesgue  measure (\cite{BDGW_null}), while it is large since it is of full dimension, and further, winning; see major developments in the past twenty years; see \cite{EGL, GY, LW, KW13, KW10, BFKRW, BFK, ET,  Fish09, KW2005, Da, Nesh-Simm, Lei2019, BNY21, BNY22}.

One main motivation for studying $\mathbf{Bad}(\bw)$ intersected with manifolds is Davenport's problem. In the  1960s, Davenport questioned whether there is a set of $\alpha\in\mathbb R$ of continuum size so that $(\alpha,\alpha^2)$ is badly approximable. This problem was solved in \cite{BV14} in a more general setup for $C^2$ \textit{nondegenerate} curves and affine lines in $\R^2$. In the same paper \cite{BV14}, they conjectured that the same phenomenon would appear in higher dimensions for non-degenerate curves, or even further, non-degenerate manifolds. To be exact, the following was stated in \cite{BV14} and reappeared in \cite{Beresnevich2015}, where Beresnevich answered the following problem affirmatively for non-degenerate analytic manifolds in $\R^n$, under the additional assumption $\inf_{\bw\in W}\min_i w_i>0$.

\begin{problem}[Problem 2, \cite{Beresnevich2015}]
    Given a ball $B\subset\R^d,$ a countable collection of weights $W$ in $\R^n,$ and a countable collection of functions $\mathcal F(B)$ of the form $\mathbf F:B\to\R^n,$ provide sufficient conditions (or even necessary conditions) so that 
\[
\dim\bigcap_{\mathbf F\in\mathcal F(B)}\bigcap_{\bw\in W}\mathbf F^{-1}(\mathbf{Bad}(\bw))=d.
\]

\end{problem}

The next breakthrough comes in the work of Beresnevich--Nesharim--Yang \cite{BNY22} where a stronger phenomenon, namely the  \textit{winning property} was shown for the above sets in the case of analytic curves in $\R^n.$ In particular, they answer the following problem for nondegenerate curves. 
\begin{problem}[Conjecture D,\cite{BV14}, Problem 3~\cite{BNY22}]\label{prob 2}
    Given a ball $B$ and a function $F: B \to \R^n$, find sufficient conditions on $F$ so that for any weight $\bw$, $F^{-1}(\mathbf{Bad}(\bw))$ is winning. 
\end{problem}

For affine subspaces (these are the opposite of being nondegenerate) the above problems are only understood in $\R^2$. The first kind of phenomenon that the dimension of $\mathbf{Bad}(\bw)\cap\mathcal L$ is full was observed in \cite{BPV2011}, where $\mathcal L=L_{a}$ was taken to be vertical lines in $\mathbb R^2$ with badly approximable coefficient $a.$ This was the key to solving Schmidt's conjecture in \cite{BPV2011}. It was extended to non-vertical lines in $\R^2$ in \cite{ABV}, motivated by the breakthrough by An in \cite{An}, where they show the winning of the above-mentioned set.

To the best of the authors' knowledge, in the case of $n>2$, neither of the above two problems has been settled  in the case $F$ is affine. We introduce the notion of weighted Diophantine exponent to get the optimal condition in the higher-dimensional affine case. To state our result, we need the following definition.

\begin{definition}\label{def: Diophantine}
    We say a matrix $M\in \R^{r\times s}$ is of Diophantine type $\kappa>0$ if 

\[
\inf_{\bq\in \Z^s\setminus\{\mathbf0\}} \Vert \bq\Vert^{\kappa}\min_{\bq_0\in \Z^{r}}\Vert M\bq+\bq_0\Vert>0.
\] 
 Then 

\[
\omega(M):=\inf \{\kappa:M\text{ is of Diophantine type }\kappa\},
\]
 the Diophantine exponent of $M$, which was first coined by Kleinbock in \cite{Kleinbock-extremal}.
\end{definition}

Let    \begin{equation}\label{eq: para of L}\mathcal{L}=\{(\bx, A\bx+\mathbf{b}):\bx\in\bU\subset \R^d\},\quad A\in\R^{m\times d},\ \mathbf b\in\R^m\text{ fixed},\quad m+d=n,\end{equation}
and $\bU$ is an open set in $\R^d$. Let us denote 
\[
A_{\mathbf{b}}:=\begin{bmatrix}
        \mathbf{b}\\
        A^\top 
    \end{bmatrix}
\]
 a matrix of size  $(d+1)\times m$.

 We will assume that the weight, \begin{equation}\label{eq: increasing w}
    w_1\geq \cdots\geq w_n.
\end{equation} 
We generalize the definition of $\omega(\cdot)$ in the following way. For any vector $\by\in \R^s$, and any $\nu\in \R_{>0}^s$, we write $\Vert \mathbf y\Vert_{\nu}:=\max_{i=1}^s|y_i|^{1/\nu_i}.$
\begin{definition}\label{def: weighted Diophantine}
   For $\nu\in \R^s_{>0}$ we say a matrix $M\in \R^{r\times s}$ is of $\nu$-weighted Diophantine type $\kappa$ if 

 \[
\inf_{\bq\in \Z^s\setminus\{\mathbf0\}} \Vert \bq\Vert_{\nu}^{\kappa}\min_{\bq_0\in \Z^{r}}\Vert M\bq+\bq_0\Vert>0.
\]

    Then 
\[
\omega_{\nu}(M):=\inf\{\kappa:M\text{ is of }\nu\text{-weighted Diophantine type }\kappa\},
\]
 the $\nu$-weighted Diophantine exponent of $M.$
\end{definition}

\begin{remark}\label{rem: rel between exponents}
In the case of equal weights, i.e., $\nu=(\alpha,\cdots,\alpha)\in \R^s_{>0}$ we have that 
$\omega_{\nu}(M)=\alpha \omega(M).$
\end{remark}

\begin{theorem}\label{main thm: linear}
    Let $\bw=(w_i)_{i=1}^n$ be a weight in $\R^n$ as in \eqref{eq: increasing w} and $\mathcal{L}$ be as in \eqref{eq: para of L}, with $A_{\mathbf{b}}$ is the parametrizing matrix. If 

\[
\omega_{w_{d+1},\cdots, w_{n}}(A_{\mathbf{b}})<1
\]
 then $\mathbf{Bad}(\bw)\cap\mathcal{L}$ is hyperplane absolute winning in $\mathcal{L}.$
\end{theorem}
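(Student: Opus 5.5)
We would follow the now-standard dynamical/duality route used for hyperplane absolute winning of $\mathbf{Bad}(\bw)$ in $\R^n$ (as in \cite{BNY21, BNY22}), transported to the parametrizing space $\bU\subset\R^d$ of $\mathcal L$. Concretely, by Dani correspondence-type duality, $\by=(\bx,A\bx+\mathbf b)\in\mathbf{Bad}(\bw)$ iff there is $c>0$ such that for every $H\ge1$ there is no nonzero integer vector $(a_0,\ba)\in\Z\times\Z^n$ with $|a_i|\le H^{w_i}$ and $|a_0+\ba\cdot\by|< cH^{-1}$. For $\ba=(\ba',\ba'')\in\Z^d\times\Z^m$ the linear form restricted to $\mathcal L$ is
\[
a_0+\ba''\cdot\mathbf b+(\ba'+A^\top\ba'')\cdot\bx ,
\]
an affine function of $\bx$ whose zero set is a hyperplane in $\R^d$ (or empty/everything). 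Thus the set to avoid at scale $H$ is a neighborhood of width $\approx cH^{-1}/\|\ba'+A^\top\ba''\|$ around such hyperplanes, and hyperplane absolute winning amounts to showing Alice can delete, at each stage, one hyperplane neighborhood covering all dangerous hyperplanes relevant at that scale.

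Step 1 (the degenerate directions). First handle $\ba$ for which the gradient $\ba'+A^\top\ba''$ is tiny, or the constant term $a_0+\ba''\cdot\mathbf b$ nearly cancels. If $\ba'+A^\top\ba''$ were extremely small relative to $H^{-1}$ and simultaneously $a_0+\ba''\cdot\mathbf b$ small, then $\|A_{\mathbf b}\ba''+(a_0,\ba')\|\lesssim H^{-1}$ with $\|\ba''\|_{(w_{d+1},\dots,w_n)}\le H$, which the hypothesis $\omega_{w_{d+1},\dots,w_n}(A_{\mathbf b})<1$ forbids: for some $\kappa<1$, $\|A_{\mathbf b}\ba''+\bq_0\|\gg H^{-\kappa}\gg H^{-1}$. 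This gives a quantitative lower bound $\max(|a_0+\ba''\cdot\mathbf b|,\|\ba'+A^\top\ba''\|)\gg H^{-\kappa}$, and the gain $H^{1-\kappa}$ is exactly the room needed so that near-constant forms never vanish to order $cH^{-1}$ on the current ball (for $c$ small), and otherwise the dangerous set is a thin slab of width $\lesssim cH^{-1+\kappa}$-ish relative to ball size.

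Step 2 (counting: one hyperplane per stage). Run the hyperplane absolute game with parameter $\beta$; at stage $k$ Bob's ball has radius $\rho_k\approx\beta^k$, and we attach a window of heights $H$ with $H^{-1}\approx \rho_k\cdot(\text{scale})$, i.e. take $H_k$ with $H_k^{-1}\asymp\beta^{k}$ times a power chosen via the weight $w_1$ (the largest weight governs the $\bx$-coordinates, which is why \eqref{eq: increasing w} puts the first $d$ weights on $\bx$). The key claim is a \emph{simplex lemma}: all integer $(a_0,\ba)$ in the box $|a_i|\le H^{w_i}$ whose slab meets Bob's ball lie in a proper sublattice whose common zero set within the ball is contained in a single hyperplane neighborhood. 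Proof idea: if $d+1$ such vectors were linearly independent modulo the relevant directions, one forms the determinant of the $(d+1)\times(d+1)$ matrix of their restricted affine forms; it is a nonzero number controlled from below using integrality plus Step 1 (the Diophantine condition again converting integrality of $(a_0,\ba)$ into a lower bound on the restricted coefficients), while the fact that all slabs meet a ball of radius $\rho_k$ forces it to be small (volume/Cramer estimate). Choosing $c$ small gives a contradiction, so the dangerous forms span at most a rank where their zero sets share a common hyperplane direction; Alice deletes that neighborhood.

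Step 3 conclude: heights are partitioned into windows $[H_k,H_{k+1})$ with $H_{k+1}/H_k$ bounded in terms of $\beta$, so every integer vector is handled at some stage; the outcome point avoids all slabs and hence lies in $\mathbf{Bad}(\bw)$ with constant $c$. The main obstacle is Step 2: the weighted box is anisotropic, and the restricted coefficients mix $\ba'$ (weights $w_1,\dots,w_d$) with $A^\top\ba''$ (weights $w_{d+1},\dots,w_n$), so the determinant lower bound needs the weighted exponent $\omega_{w_{d+1},\dots,w_n}$ rather than $\omega$; I would first try doing the count via a Minkowski-successive-minima argument on the lattice $\{(a_0+\ba''\cdot\mathbf b,\ \ba'+A^\top\ba'')\}$ after the diagonal rescaling $\diag(H^{\dots})$, showing its first $d+1$ minima cannot all be small.
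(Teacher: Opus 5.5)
\textbf{There is a genuine gap, and it sits in Step~2, which is in fact the whole theorem.} Your Step~1 is fine: it is the rank-one Diophantine input, essentially Lemma~\ref{lem: dani0}. The determinant argument you sketch for Step~2 does not work, for two reasons.

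\emph{The determinant has no lower bound from integrality.} The rows $(a_0^{(j)}+\mathbf b\cdot\ba''^{(j)},\,\ba'^{(j)}+A^\top\ba''^{(j)})$ are images of integer vectors under the map $(a_0,\ba',\ba'')\mapsto(a_0,\ba')+A_{\mathbf b}\ba''$. This map has an $m$-dimensional kernel, and the image of $\Z^{n+1}$ has rank $n+1>d+1$, so it is not discrete. Hence the $(d+1)\times(d+1)$ determinant of restricted forms is a real number with no integrality lower bound; more generally, wedges of several projected vectors can be tiny. Step~1 bounds a single row from below but says nothing about near-dependence of several rows. A lower bound in terms of $H$ is a \emph{higher-rank} Diophantine property of $A_{\mathbf b}$. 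This is exactly the obstruction the paper isolates. Converting the rank-one hypothesis $\omega_{w_{d+1},\dots,w_n}(A_{\mathbf b})<1$ into control of all wedges is the content of Proposition~\ref{affine-global}. There one decomposes $C^\star_{A,\mathbf b}\bv$ into components $\bw_i\in\bigwedge^iV_1\otimes\bigwedge^{r-i}V_2$ and proves a sharp bound on $\bw_1$. One then picks a time $T^\star$ at which $\Vert b(T^\star)\bw_0\Vert$ balances the bound for $\Vert b(T^\star)\bw_1\Vert$, and uses the Shah--Yang inequality to show $\bw_0$ dominates. Minkowski then gives a short vector contradicting Lemma~\ref{lem: dani0}. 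Your proposal contains no such reduction.

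\emph{Dependence does not give a single hyperplane neighbourhood.} Even granting a determinant bound, linear dependence of the dangerous restricted forms only says they span a subspace of dimension $k\le d$ in $\R^{d+1}$. Their zero sets then form a pencil of hyperplanes through a common affine subspace of dimension $d-k$. For $k\ge 2$ these hyperplanes can cross inside Bob's ball at angles bounded below; already for $d=2$, two lines through the centre suffice. No single neighbourhood of width $\beta\rho$ covers both slabs. To delete once per turn you would need all dangerous forms at a given stage to be nearly proportional.

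\emph{Height windows do not match stages.} For fixed $H$, the slab width $cH^{-1}/\Vert\ba'+A^\top\ba''\Vert$ ranges from about $cH^{-1-w_1}$ to about $cH^{-1+\kappa}$, that is, over $\asymp\log H$ stages. So each stage must absorb slabs from a wide range of heights. Conversely, a slab that is already wider than Bob's ball when its window comes up needed to be removed earlier. In the weighted setting nothing guarantees one relevant slab per turn.

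This is why the paper never plays the hyperplane game directly. Via Lemma~\ref{equivalence1} it reduces to showing that $\phi^{-1}(\mathbf{Bad}(\bw))$ meets the support of every Ahlfors regular, absolutely decaying $\mu$. It then wins a Cantor potential game in which Alice may delete up to $\beta^{-\alpha'(i+1)}$ balls at depth $i$. That count is supplied by quantitative nondivergence (Theorem~\ref{analytic-nde}) for the two-parameter family $d_la_k$, and the hypothesis of that theorem is furnished by Proposition~\ref{affine-global}.
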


\noindent  In case $\bw=(1/n, \cdots, 1/n)$, i.e.,  the equal-weight case, due to Remark \ref{rem: rel between exponents}, we get that the above condition in the theorem is the same as $\omega(A_{\mathbf{b}})<n$. The following remarks show that our condition is almost optimal and highly generic.

\begin{remark}\begin{itemize}
   \item We note that if $\omega_{(w_{d+1},\cdots,w_n)}(A_{\mathbf{b}})>1$, then for some $\epsilon>0$ and infinitely many $\bq_2\in\Z^m\setminus\{\mathbf0\}$, the following holds  \begin{equation}\label{eq: tight condition on A_b}\Vert A_{\mathbf{b}}\bq_2\Vert_{\Z}\leq \Vert \bq_2\Vert_{\pi_2(\bw)}^{-1-\epsilon}.\end{equation} It follows that $\bad(\bw)\cap\cL=\emptyset$ ; see \S \ref{sec:optimal}. Here, $\pi_2(\bw)=(w_{d+1},\ldots,w_n)$, and  we use the notation $\Vert\by\Vert_{\Z}:=\min_{\bp\in\Z^{d+1}}\Vert\by+\bp\Vert$ and  $|x|_\mathbb Z:=\inf_{p\in\Z}|x+p|.$
  \item 
    In particular, if $\omega(A_\mathbf{b})>\frac{1}{w_{n}}$, then $\bad(\bw)\cap \mathcal{L}$ is empty. This explains that for $n=2$ in \cite{ABV}, the above condition is almost optimal, whereas in higher dimension almost optimal condition involves a weighted Diophantine condition on $A_{\mathbf{b}}$.

    \item  In the weighted case, the corresponding generic exponent is

\[
\omega_{(w_{d+1},\ldots,w_n)}(A_{\mathbf b})=\frac{\sum_{i=d+1}^n w_i}{d+1}<1.
\]

Thus our condition holds for almost every $A_{\mathbf b}$.

\end{itemize}
\end{remark}

As an immediate corollary of Theorem \ref{main thm: linear},  using the full-dimension property of HAW sets from \mbox{
\cite[Proposition 2.3]{BFKRW}}\hskip0pt
, we obtain the following, where $\dim$ denotes Hausdorff dimension.
 \begin{corollary}
    Let $\mathcal{L}$ and $\bw$ be as in Theorem \ref{main thm: linear}, then 
 \[
\dim\bigl(\mathbf{Bad}(\bw)\cap\mathcal L\bigr)=\dim\mathcal L.
\]

\end{corollary}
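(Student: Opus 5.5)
The plan is to deduce the corollary from Theorem \ref{main thm: linear} together with the full-dimension property of hyperplane absolute winning (HAW) sets in \cite[Proposition 2.3]{BFKRW}. Throughout, $\bw$ is as in \eqref{eq: increasing w}, $\mathcal{L}$ is as in \eqref{eq: para of L}, and we assume the hypothesis $\omega_{w_{d+1},\cdots,w_n}(A_{\mathbf b})<1$. Under this hypothesis, Theorem \ref{main thm: linear} says that $\mathbf{Bad}(\bw)\cap\mathcal L$ is HAW in $\mathcal L$.

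First I will move the problem to $\bU\subset\R^d$ using the parametrization $\phi:\bU\to\mathcal L$, $\phi(\bx)=(\bx,A\bx+\mathbf b)$. This map is an affine bijection onto $\mathcal L$, with inverse given by projection onto the first $d$ coordinates. Hence $\phi$ is bi-Lipschitz, and it sends affine hyperplanes of $\R^d$ to affine hyperplanes of $\mathcal L$. Consequently the HAW game played on $\mathcal L$ corresponds, up to constants in the radii, to the HAW game on $\bU$. I will then check that $\phi^{-1}(\mathbf{Bad}(\bw)\cap\mathcal L)$ is HAW on $\bU$. If HAW in $\mathcal L$ is in fact defined through such a parametrization, this step is immediate. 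Otherwise I will invoke the standard invariance of HAW under $C^1$ diffeomorphisms from \cite{BFKRW}.

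Next I will apply \cite[Proposition 2.3]{BFKRW}: a HAW subset of an open set in $\R^d$ has full Hausdorff dimension $d$, and indeed it meets every ball in a set of dimension $d$. This gives
\[
\dim \phi^{-1}\bigl(\mathbf{Bad}(\bw)\cap\mathcal L\bigr)=d .
\]

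Finally, Hausdorff dimension is invariant under bi-Lipschitz maps, so $\dim(\mathbf{Bad}(\bw)\cap\mathcal L)=d=\dim\mathcal L$. The upper bound $\dim(\mathbf{Bad}(\bw)\cap\mathcal L)\le\dim\mathcal L$ is trivial, since the set is contained in the $d$-dimensional affine piece $\mathcal L$.

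The only point needing care is the first step, namely matching the notion of HAW "in $\mathcal L$" with HAW on the parameter domain $\bU$. Since $\phi$ is affine, I expect this to amount to unwinding the definitions.
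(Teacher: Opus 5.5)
Your proposal is correct and follows the paper's argument: the paper deduces the corollary directly from Theorem \ref{main thm: linear} together with the full-dimension property of HAW sets in \cite[Proposition 2.3]{BFKRW}. Your extra step of transferring HAW and Hausdorff dimension through the affine bi-Lipschitz parametrization $\phi$ is harmless bookkeeping; in fact, the paper's proof of Theorem \ref{main thm: linear} already establishes that $\phi^{-1}(\mathbf{Bad}(\bw))$ is HAW in $\bU$, via Lemma \ref{equivalence1}.
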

Next we have an analogue to Theorem \ref{main thm: linear} for analytic nondegenerate manifolds.
\begin{theorem}\label{main thm: nondeg}
    Let $\phi:U\subset\R^d\to \R^{n}, \phi(\bx):=(\bx, \varphi(\bx))$ be an analytic and nondegenerate map, where $U$ is an open set in $\R^d$. For any weight $\bw\in \R^n$ with $w_1\geq \cdots\geq w_{n}$, $\phi^{-1}(\mathbf{Bad}(\bw))$ is hyperplane absolute winning in $U.$
\end{theorem}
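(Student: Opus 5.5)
We prove Theorem \ref{main thm: nondeg} by playing the hyperplane absolute game on $U$ and at each stage deleting thin neighbourhoods of the ``dangerous'' hyperplanes, in the spirit of \cite{BNY22} for curves and of the dual-problem approach behind Theorem \ref{main thm: linear}. We use the dual description of weighted badness: $\phi(\bx)\in\mathbf{Bad}(\bw)$ if and only if there is $c>0$ with
\[
|a_0+\ba\cdot\phi(\bx)|\geq c\,\Vert \ba\Vert_{\bw}^{-1}\quad\text{for all } (a_0,\ba)\in\Z\times(\Z^n\setminus\{\mathbf 0\}),
\]
where $\Vert\ba\Vert_{\bw}=\max_i|a_i|^{1/w_i}$. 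This follows from Mahler's transference principle; equivalently, $\phi(\bx)\in\mathbf{Bad}(\bw)$ iff the orbit $g_t u_{\phi(\bx)}\Z^{n+1}$ is bounded in $\SL_{n+1}(\R)/\SL_{n+1}(\Z)$, where $g_t=\diag(e^t,e^{-w_1t},\ldots,e^{-w_nt})$. It then suffices to show that the set of $\bx$ for which the dual inequality holds with a fixed small $c$ contains the outcome of a winning strategy for Alice.

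\textbf{Step 1: bad sets are neighbourhoods of hypersurfaces.} For each integer vector $(a_0,\ba)$, the set $\{\bx\in U:|a_0+\ba\cdot\phi(\bx)|<c\Vert\ba\Vert_{\bw}^{-1}\}$ is a neighbourhood of the analytic hypersurface $\{a_0+\ba\cdot\phi=0\}$. On a ball of radius $r$ that is small compared with $|\nabla(\ba\cdot\phi)|^{-1}$, the hypersurface is approximated, up to second-order error, by a $C^1$-controlled neighbourhood of an affine hyperplane. This is the step that needs nondegeneracy.

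\textbf{Step 2: counting the dangerous vectors.} Organise the vectors $\ba$ into dyadic shells $\Vert\ba\Vert_{\bw}\sim e^{t}$ and pair each shell with the stage of the game at which Bob's ball has radius $r\approx e^{-\lambda t}$, for a suitable $\lambda$. The goal is to show that, inside any current ball $B$ of radius $r$, all integer vectors from one shell whose neighbourhoods meet $B$ have neighbourhoods contained in a bounded number $N$ (independent of $t$) of $\beta r$-neighbourhoods of hyperplanes. Alice can then delete them one at a time over $N$ consecutive turns. The plan is to encode the vectors in the shell, via the flow, as short vectors of the lattice $g_t u_{\phi(\bx_B)}\Z^{n+1}$, where $\bx_B$ is the centre of $B$:
\begin{itemize}
\item If these short vectors span a subspace of dimension at least $2$, the corresponding functions $a_0+\ba\cdot\phi$ on $B$ are integer combinations of one another. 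Using the Kleinbock--Margulis $(C,\alpha)$-good and nondegeneracy estimates for analytic maps, together with the decay of the quantities $\Vert g_t u_{\phi(\bx)}\Lambda\Vert$ over primitive subgroups $\Lambda$, one shows that their zero sets cluster near a single hyperplane.
\item If they span only a line, there is essentially one dangerous vector, and it is handled directly.
\end{itemize}
The ordering $w_1\ge\cdots\ge w_n$ together with the graph form $\phi(\bx)=(\bx,\varphi(\bx))$ ensures that the gradient of $\ba\cdot\phi$ is dominated by the coordinates $a_1,\ldots,a_d$, up to the contribution from $\varphi$. This is what lets us compare the thickness $c\Vert\ba\Vert_{\bw}^{-1}/|\nabla(\ba\cdot\phi)|$ with $\beta r$.

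\textbf{Step 3: a degenerate subcase.} The case in which $|\nabla(\ba\cdot\phi)|$ is abnormally small on $B$, i.e. near critical points of $\ba\cdot\phi$, must be handled separately. The first approach to try is the analyticity-based strategy of \cite{BNY22}: the set where the gradient is small is itself contained in a small neighbourhood of a proper analytic subvariety. We bound its size using the $(C,\alpha)$-good property, and Alice removes it by spending extra moves, using that HAW is preserved under countable intersections and under $C^1$ diffeomorphisms, together with the local-to-global property of HAW sets \cite{BFKRW}.

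\textbf{Main obstacle.} I expect the main obstacle to be Step 2 in dimension $d>1$: showing that the dangerous integer vectors of one scale are governed by a uniformly bounded number of hyperplanes. For curves this reduces to points, but here it requires a quantitative nondegeneracy statement uniform over all dual vectors. I would try to deduce it from a weighted version of the quantitative non-divergence bound of Kleinbock--Margulis, applied to each primitive subgroup that can carry two dangerous vectors.
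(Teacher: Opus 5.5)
Your dual reformulation of $\mathbf{Bad}(\bw)$ and the local hyperplane approximation in Step 1 are fine. The genuine gap is Step 2, which is the entire difficulty, and the mechanism you sketch for it does not work. Suppose two dangerous vectors $\bv_1,\bv_2$ span a rank-two primitive subgroup. Then the functions $f_1+kf_2$ do not have zero sets clustering near a single hyperplane. After linearising on $B$, their zero sets form a pencil of hyperplanes through the codimension-two set $\{f_1=f_2=0\}$, and their normals sweep out a two-dimensional family of directions. The number of pencil members that come from one shell and meet $B$ is roughly the reciprocal of $\Vert g_tu_{\phi(\bx_B)}(\bv_1\wedge\bv_2)\Vert$. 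So it is bounded only if that covolume is bounded below at that scale, and the same issue arises for every rank $2\le r\le n$. Your proposal has no mechanism giving such lower bounds at all scales. A Kleinbock--Margulis type estimate cannot supply what you need: Theorem~\ref{analytic-nde} gives a \emph{measure} bound for the set where the lattice leaves $K_\epsilon$, and only after its alternative (2) has been ruled out. It never produces a uniformly bounded number of hyperplane neighbourhoods, which is what a move-by-move strategy in the hyperplane absolute game requires.

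This is why the paper never plays the hyperplane game directly. Lemma~\ref{equivalence1} reduces HAW to showing $\phi^{-1}(\mathbf{Bad}(\bw))\cap\supp\mu\neq\emptyset$ for every nonzero Ahlfors regular, absolutely decaying $\mu$ with $\supp\mu\subset U$. On $\supp\mu$, Alice plays the Cantor potential game, where she may delete collections of up to $\beta^{-\gamma(i+1)}$ balls of radius $\beta^{N+1+i}r_0$ with $\gamma<\alpha$; measure bounds therefore suffice, via Lemma~\ref{covering}. The deleted sets are $\{\bx: d_la_{N+sl}z_{D\varphi(\bx)}u_{\phi(\bx)}\Z^{n+1}\notin K_{\beta^{\eta l}}\}$. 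Alternative (2) of non-divergence is excluded in two ways:
\begin{itemize}
\item at the first stage, by the global estimate Proposition~\ref{nondeg-global}, which holds for every diagonal $g$ with $\lambda_0\ge 1\ge\lambda_i$ and is where nondegeneracy enters;
\item at later stages, by the conditions \eqref{removed-1} and \eqref{removed-2}, which hold because the current centre survived the earlier deletions (Lemma~\ref{main-lemma}).
\end{itemize}
This inductive exclusion of small covolumes is exactly the ingredient missing from your Step 2.

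Your Step 3 has the same defect. The near-critical set of $\ba\cdot\phi$ is a neighbourhood of an analytic subvariety depending on $\ba$, not of a hyperplane, so Alice cannot remove it in the hyperplane game. Stability of HAW under countable intersections does not help either: $\phi^{-1}(\mathbf{Bad}(\bw))$ needs one constant $c$ uniform over all $(a_0,\ba)$, so it is not a countable intersection of sets you can treat one at a time. In the paper no separate treatment is needed, because the non-divergence estimate for the analytic map $z_{D\varphi(\bx)}u_{\phi(\bx)}$ against absolutely decaying measures handles critical points automatically.
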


The overall outline of the proof adapts the methods in \cite{BNY22} and \cite{BNY21}, which exploit linearization and a modified version of quantitative nondivergence from \cite{KM, KLW} for \textit{friendly} measures. But some natural difficulties lie in the use of quantitative nondivergence. The affine case demands a fundamentally new analysis: unlike nondegenerate manifolds, affine subspaces exhibit higher-rank obstructions that persist even for Lebesgue measure and are not addressed by existing methods; see Section \ref{sec: global affine}. Our main innovation is to show that modified nondivergence for \textit{Ahlfors regular}, \textit {absolutely decaying} measures can be applied in affine subspaces and nondegenerate manifolds of higher dimension. The method of implication for each of these cases is different. We believe results of this paper can be generalized to the inhomogeneous set-up using the ideas in \cite{DattaShao2024, DattaShao2026}. 

\section{Preliminary}\label{sec:preli}
Let $\bw=(w_i)\in\R^n$ be a weight. Without loss of generality, we may assume \begin{equation}w_1=w_2=...=w_{T_{\max}}>w_{T_{\max}+1}\geq...\geq w_n>0.\end{equation}
If $T_{\max}=n$, then we  set $w_{T_{\max}+1}=0$ . We take  $1\leq d<n$. Throughout, vector and exterior-power norms are Euclidean, and $\lambda_1(\Lambda):=\min_{\bv\in\Lambda\setminus\{0\}}\Vert\bv\Vert$. We write $K_\epsilon:=\{\Lambda\in\SL_{n+1}(\R)/\SL_{n+1}(\Z):\lambda_1(\Lambda)\geq\epsilon\}$.

For  $0<\beta<1$, $b=\beta^{-1/(1+w_1)}>1$, $y\in\R$,  $\bx\in \R^d,\br\in\R^n$ and matrix $M\in\R^{(n-d)\times d}$, while $\mathbf0\in\R^{n-d}$ is the zero vector, let us define,
\begin{equation}\label{def-of-matrix}
\begin{aligned}
    &a_y:=\textbf{diag}\{b^y,b^{-w_1y},\cdots,b^{-w_ny}\},\\&d_y:=\textbf{diag}\{\beta^{\frac{ty}{n+1}},\underbrace{\beta^{-\frac{(n+1-t)y}{n+1}},\cdots,\beta^{-\frac{(n+1-t)y}{n+1}}}_{t\text{ many}},\beta^{\frac{ty}{n+1}},\cdots,\beta^{\frac{ty}{n+1}}\},\text{where }t:=\min\{T_{\max},d\},\\&u_\mathbf{r}:=\begin{pmatrix}
        1&\br\\
        0&\mathrm{I}_n
    \end{pmatrix},  \text{ where } \mathrm{I}_n \text{ is the identity matrix of size  } n\times n,\\
    &z_M:=\begin{pmatrix}
        \mathrm I_{d+1}&\tilde{M}\\
        0&\mathrm I_{n-d}
    \end{pmatrix},\text{where }\tilde{M}=\begin{pmatrix}
        \mathbf 0\\ M^T
    \end{pmatrix}\in\mathbb R^{(d+1)\times( n-d)}, M=(M_{ij}).
    \end{aligned}
\end{equation}
Then as an  analogue  of \cite[Lemma 2.3]{BNY22}, we have the following identities.
\begin{lemma}
    \begin{equation}\label{conjugate-au}
        a_yu_{\br} a_{-y}=u_{(b^{(1+w_1)y}r_1,...,b^{(1+w_n)y}r_n)}
    \end{equation}
    \begin{equation}\label{conjugate-az}
        a_yz_Ma_{-y}=z_{(b^{(w_{i+d}-w_j)y}M_{ij})}
    \end{equation}
    \begin{equation}\label{conjugate-du}
        d_yu_{\br} d_{-y}=u_{(\beta^yr_1,...,\beta^yr_t,r_{t+1},...,r_n)}
    \end{equation}
    \begin{equation}\label{conjugate-dz}
        d_yz_M d_{-y}=z_{(\beta^{-\mathbbm 1_{\{j\leq t,i+d>t\}}y}M_{ij})}
    \end{equation}
    \begin{equation}\label{conjugate-zu}
        z_{M}u_{(\bx\text{ }\bx\cdot M^T)} z_{M}^{-1}=u_{(\bx\text{ }\mathbf0)}
    \end{equation}
    In particular, when $t=d$, \eqref{conjugate-dz} can be transformed into \begin{equation}
        d_yz_Md_{-y}=z_{\beta^{-y}M}
    \end{equation}
\end{lemma}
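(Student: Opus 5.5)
All of these identities come from direct matrix multiplication. The plan is to index rows and columns of $(n+1)\times(n+1)$ matrices by $0,1,\dots,n$ and to use one elementary fact. If $D=\diag\{\delta_0,\dots,\delta_n\}$ is diagonal and invertible, then
\[
(DXD^{-1})_{kl}=\delta_k\delta_l^{-1}X_{kl}.
\]
So conjugating a unipotent matrix by a diagonal one rescales each off-diagonal entry and leaves the diagonal alone. The only real care needed is in locating the entries of $z_M$. Since $\tilde M=\begin{pmatrix}\mathbf 0\\ M^T\end{pmatrix}$, the nonzero off-diagonal entry of $z_M$ in row $j\in\{1,\dots,d\}$ and column $d+i$, with $i\in\{1,\dots,n-d\}$, is $(M^T)_{ji}=M_{ij}$.

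\emph{Identities \eqref{conjugate-au} and \eqref{conjugate-az}.} For $a_y$ we have $\delta_0=b^y$ and $\delta_k=b^{-w_ky}$ for $k\ge1$. The entry of $u_\br$ at position $(0,i)$ is $r_i$, and it is multiplied by $b^yb^{w_iy}=b^{(1+w_i)y}$; this gives \eqref{conjugate-au}. The entry of $z_M$ at position $(j,d+i)$ is $M_{ij}$, and it is multiplied by $b^{-w_jy}b^{w_{d+i}y}=b^{(w_{i+d}-w_j)y}$; this gives \eqref{conjugate-az}.

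\emph{Identities \eqref{conjugate-du} and \eqref{conjugate-dz}.} For $d_y$ we have $\delta_0=\beta^{ty/(n+1)}$, $\delta_k=\beta^{-(n+1-t)y/(n+1)}$ for $1\le k\le t$, and $\delta_k=\beta^{ty/(n+1)}$ for $k>t$.
\begin{itemize}
\item For $u_\br$, the factor $\delta_0/\delta_i$ equals $\beta^{(t+n+1-t)y/(n+1)}=\beta^y$ when $i\le t$, and equals $1$ when $i>t$. This is \eqref{conjugate-du}.
\item For $z_M$, the factor on $M_{ij}$ is $\delta_j/\delta_{d+i}$, and there are three cases.
  \begin{itemize}
  \item If $j\le t$ and $d+i>t$, the factor is $\beta^{-(n+1-t)y/(n+1)-ty/(n+1)}=\beta^{-y}$.
  \item If $j\le t$ and $d+i\le t$, both indices lie in the same block, so the factor is $1$.
  \item If $j>t$, then also $d+i>d\ge j>t$, so again both indices lie in the same block and the factor is $1$.
  \end{itemize}
  This is exactly the indicator $\mathbbm 1_{\{j\le t,\,i+d>t\}}$ in \eqref{conjugate-dz}.
\end{itemize}
When $t=d$, every $j\le d$ satisfies $j\le t$ and every $i\ge1$ satisfies $d+i>t$. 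The indicator is therefore identically $1$, which gives $d_yz_Md_{-y}=z_{\beta^{-y}M}$.

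\emph{Identity \eqref{conjugate-zu}.} Here we work with $3\times3$ block matrices, using the index blocks $\{0\}$, $\{1,\dots,d\}$ and $\{d+1,\dots,n\}$. In these blocks,
\[
z_M=\begin{pmatrix}1&0&0\\0&\mathrm I_d&M^T\\0&0&\mathrm I_{n-d}\end{pmatrix},\qquad
u_{(\bx\ \bx M^T)}=\begin{pmatrix}1&\bx&\bx M^T\\0&\mathrm I_d&0\\0&0&\mathrm I_{n-d}\end{pmatrix},
\]
and $z_M^{-1}=z_{-M}$. Multiplying first gives
\[
z_Mu_{(\bx\ \bx M^T)}=\begin{pmatrix}1&\bx&\bx M^T\\0&\mathrm I_d&M^T\\0&0&\mathrm I_{n-d}\end{pmatrix}.
\]
Multiplying this on the right by $z_{-M}$ changes the third block column to $-\bx M^T+\bx M^T=0$ in the top row and $-M^T+M^T=0$ in the middle row. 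The result is $u_{(\bx\ \mathbf 0)}$.

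There is no genuine obstacle in any of these steps. The one place where an error could slip in is the transposed indexing of $M$ inside $\tilde M$ when reading off the exponents in \eqref{conjugate-az} and \eqref{conjugate-dz}.
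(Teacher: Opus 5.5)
Your proposal is correct, and it matches the paper's approach: the paper gives no separate proof and only cites the analogy with \cite[Lemma 2.3]{BNY22}, so entrywise rescaling under diagonal conjugation, the reading $(z_M)_{j,d+i}=M_{ij}$, and the block computation for \eqref{conjugate-zu} are exactly the routine verification it leaves implicit. One small remark: your subcase $j\le t$, $d+i\le t$ never occurs, since $t=\min\{T_{\max},d\}\le d<d+i$, so the indicator in \eqref{conjugate-dz} reduces to $\mathbbm 1_{\{j\le t\}}$.
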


\begin{definition}
\label{ahlfors}
    A Borel measure $\mu$ on $\R^d$ is called $\alpha$-Ahlfors regular , for $\alpha>0$,  if there exists $A,\rho_0>0$ such that for any ball $B(\bx,\rho)\subset\R^d$ where $\bx\in\supp\mu$ and  $0<\rho\leq \rho_0$ , we have \begin{equation}\label{ahlfors property}
    A^{-1}\rho^{\alpha}\leq\mu(B(x,\rho))\leq A\rho^{\alpha}.
    \end{equation}
\end{definition}

\begin{definition}
\label{decaying}
    A Borel measure $\mu$ on $\R^d$ is called absolutely decaying if there exists $D,\delta>0$ and $r_0>0$ such that for every  $0<r\leq r_0$  and $\bx\in\supp \mu,$ every hyperplane  $H$  in $\R^d$, and $r'>0,$ we have \begin{equation}\label{decaying property}
        \mu\left(B(H,r')\cap B(\bx,r)\right)\leq D\left(\frac{r'}{r}\right)^\delta\mu(B(\bx,r)).
    \end{equation}
\end{definition}

Let us recall the definition of  hyperplane absolute winning in an open set $U\subset\R^d$  from \cite{BFKRW} and equivalent fixed-radius version of the game, as in 
\cite[Definition 7 and Proposition 8]{BNY21}. 

The game is played between two players Alice and Bob, who  take turns playing . Bob chooses a parameter $0<\beta<\frac{1}{3}$, and a ball $B_0\subset U$ of radius $r_0>0.$ Alice chooses a neighborhood $A_1$ of a hyperplane in $\R^d$ of radius  $\beta r_0$ . Then Bob chooses a ball $B_1\subset B_0\setminus A_1$ of radius $r_{1}=\beta r_0$.  At the  $N$-th step, Alice chooses a neighborhood $A_{N+1}$ of a hyperplane in $\R^d$ of radius $\beta r_{N}$ and then Bob chooses a ball $B_{N+1}\subset B_N\setminus A_{N+1}$ with radius $\beta^{N+1}r_0$. Continuing like this, we say a set $S\subset U$ is  hyperplane  absolute winning in $U$ if , for every $0<\beta<1/3$ and every initial closed ball $B_0\subset U$,  Alice has a strategy ensuring 
 \[
S\cap \bigcap_{N\geq 0} B_N\neq \emptyset.
\] 

\begin{lemma}\label{equivalence1}

   Let  $U\subset\R^d$ be open and $S\subset U$ be  a Borel set. If for  every nonzero  Ahlfors regular absolutely decaying measure $\mu$, 
\[
\supp\mu\subset U\implies S\cap\supp\mu\neq\emptyset,
\]
 then $S$ is HAW in $U.$

\end{lemma}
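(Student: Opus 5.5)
This is the contrapositive of the standard fact that sets of positive measure-theoretic ``size'' cannot be avoided by a HAW-losing configuration. The plan is to argue by contradiction: assume $S$ is not HAW in $U$. Then there exist $0<\beta<1/3$ and a closed ball $B_0\subset U$ such that Bob has a winning strategy. For every strategy of Alice, Bob can then force $\bigcap_N B_N\cap S=\emptyset$. (Here I use that the hyperplane absolute game, like the fixed-radius version recalled from \cite[Definition 7 and Proposition 8]{BNY21}, is a Borel-payoff game and hence determined, by Martin's theorem, since $S$ is Borel.) From Bob's winning strategy I will build a nonzero Ahlfors regular, absolutely decaying measure $\mu$ with $\supp\mu\subset B_0\subset U$ and $\supp\mu\cap S=\emptyset$. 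This contradicts the hypothesis.

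The construction is a Cantor-type tree. At stage $N$, with current ball $B_N$ of radius $r_N=\beta^N r_0$, Alice has many admissible moves: neighborhoods of hyperplanes of radius $\beta r_N$. I choose a fixed finite family of them, for instance slabs $A^{(1)},\dots,A^{(k)}$ orthogonal to the coordinate directions and placed at well-separated positions inside $B_N$. Bob's strategy answers each with a ball $B_{N+1}^{(i)}\subset B_N\setminus A^{(i)}$ of radius $\beta r_N$. By choosing Alice's moves adaptively, each time removing a neighborhood of a hyperplane through the ball Bob just picked, I will extract at least $K\ge 2$ children of $B_N$. These children should be pairwise separated by a distance $\gtrsim \beta r_N$, with $K$ depending only on $\beta$ and $d$. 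Iterating over all branches gives a tree of balls. Every infinite branch is a legal play consistent with Bob's strategy, so its limit point lies outside $S$. Let $\mu$ be the natural measure giving mass $K^{-N}$ to each stage-$N$ ball. Its support is the limit set $C=\bigcap_N\bigcup B_N^{(\cdot)}$, which is disjoint from $S$ and contained in $B_0\subset U$.

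It remains to verify the two regularity properties. Ahlfors regularity with $\alpha=\log K/\log(1/\beta)$ follows from uniform branching $K$ and the separation of children at every scale: a ball of radius $\rho\asymp r_N$ centered on $C$ meets only boundedly many stage-$N$ balls and contains at least one of them. Absolute decay is the main obstacle. The children must be spread out so that no thin neighborhood $B(H,r')$ of a hyperplane meets all of them. I would try to arrange that among the $K$ children of any ball, at least one is at distance $\ge c\beta r_N$ from any given hyperplane; for this I need $d+1$ children in general position, for example near the vertices of a simplex. Then $\mu(B(H,\beta^j r)\cap B(\bx,r))\le (1-K^{-1})^{j}\mu(B(\bx,r))$ up to constants, which gives decay with $\delta=-\log(1-1/K)/\log(1/\beta)$. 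Getting children in general position is where the hyperplane game helps. Alice may delete a neighborhood of the affine hull of the children already obtained. Bob's answer must then avoid it, so after $d+1$ rounds the chosen balls are not nearly contained in a single hyperplane-neighborhood of width $\beta r_{N}$. Hence the obstacle can be handled by carefully choosing Alice's finite menu of moves, after passing to a subsequence of stages (grouping several steps) if necessary to maintain separation.
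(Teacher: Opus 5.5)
Your outline agrees with the paper's up to the construction of the measure. You assume $S$ is not HAW, get a winning strategy for Bob by determinacy (the paper cites \cite[Theorem 1.6]{FLS2014}; your Borel-payoff argument is fine), build a tree of Bob's answers to a finite menu of Alice's moves, and note that every branch is a legal play, so the limit set misses $S$.

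The gap is in the absolute decay step. Putting the $j$-th slab around a hyperplane containing the centres already obtained only forces the $j$-th centre to be at distance at least $2\beta r_N$ from the affine hull of the earlier ones. That makes the children interior-disjoint, but it does not stop all of them from lying close to one hyperplane. Here is a concrete case.
\begin{itemize}
\item Take $d=2$, $S=\emptyset$ (so every strategy of Bob is winning, for any $\beta$), $L=\R\be_1$, and a ball $B$ of radius $\rho$ centred at $o\in L$.
\item Let the three slabs be around the line through $o$ orthogonal to $\be_1$, the line through $c_1$ orthogonal to $\be_1$, and the line $c_1c_2$.
\item Bob may answer with centres $c_1=o-\frac{\rho}{4}\be_1$, $c_2=c_1+3\beta\rho\,\be_1+12\beta^2\rho\,\be_2$ and $c_3=c_1+\frac{3\rho}{5}\be_1$.
\item The line $c_1c_2$ has slope $4\beta$, so for small $\beta$ the point $c_3$ is at distance more than $2\beta\rho$ from it. All three answers are therefore legal.
\end{itemize}
Nevertheless all three centres lie within $12\beta^2\rho$ of $L$. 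So $L$ passes through every child, and no child is disjoint from $B(L,c\rho)$ for any $c>0$. Each child's centre is within $12\beta$ times its own radius of $L$, so Bob can repeat the same answers inside every child. The whole limit set then lies on $L$, no measure on it is absolutely decaying, and grouping stages does not help. The same tilting trick works for any direction of $L$ transverse to the first two slabs. Your claim that after $d+1$ rounds the children are not nearly contained in one hyperplane neighbourhood of width $\asymp\beta r_N$ is therefore false.

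The missing idea is that the menu at a ball $B$ of radius $\rho$ must handle every hyperplane at once. Lemma~\ref{axiom} (\cite[Lemma 15]{BNY21}) supplies a finite family $\mathcal H_B$ such that for each hyperplane $H'$ there is $H\in\mathcal H_B$ with $B\cap B(H',\beta'\rho)\subset B(H,\beta\rho)$. Hence Bob's answer to the move $B(H,\beta\rho)$ is a child of $B$ disjoint from $B(H',\beta'\rho)$. The paper plays all of $\mathcal H_B$ at every node and concludes that the limit set is $\frac{\beta\beta'}{2}$-hyperplane diffuse. It then obtains the Ahlfors regular absolutely decaying measure from Proposition~\ref{construction-of-mu} (\cite[Proposition 14]{BNY21}) rather than building it by hand. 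That detour matters here: Bob's answers to different elements of $\mathcal H_B$ may overlap or coincide. Assigning mass $K^{-N}$ to each stage-$N$ ball would then no longer give Ahlfors regularity without an extra pruning step, whereas hyperplane diffuseness is unaffected by such overlaps.
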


We will provide a proof of the above lemma in Appendix \ref{sec: Appendix}, which   follows the argument  in \cite[\S 2]{BNY21}.

In order to show the non-emptiness of $S\cap\supp\mu,$ as above we use the following notion of Cantor winning in $\supp\mu$.

\begin{definition}\label{cantor game}
    Let $\mu$ be a  nonzero  $\alpha$-Ahlfors regular measure in  $\R^d$. All balls in this game are balls in the complete metric space $X=\supp\mu$, with centres in $X$; thus $B_X(\bx,r)=X\cap B(\bx,r)$.  The $\gamma$-Cantor potential game is played by two players, say Alice and Bob, who take turns making their moves. Bob starts by choosing a parameter $0<\beta<1$, which is fixed throughout the game, and a ball $B_0\subset \supp\mu$ of radius $r_0>0$. Subsequently for $N=0,1,2,\cdots$, first, Alice chooses collections $\mathcal{A}_{N+1,i}$ of at most $\beta^{-\gamma(i+1)}$ balls of radius $\beta^{N+1+i}r_0$ for every $i\geq 0$. Then, Bob chooses a ball $B_{N+1}$ of radius $\beta^{N+1}r_0$ which is contained in $B_N$ and disjoint from $\bigcup_{0\leq l\leq N}\bigcup_{A\in\mathcal{A}_{N+1-l,l}}A.$ If there is no such ball then Alice wins by default. Otherwise, the outcome of the game is the unique point in $\cap_{N\geq 0}B_N.$

    A set $S\subset \supp\mu$ is called $\gamma$-Cantor winning if , for every $0<\beta<1$ and every initial ball,  Alice has a strategy that ensures that she either wins by default or the outcome lies in $S.$ 
    $S\subset \supp\mu$ is called Cantor winning if it is $\gamma$-Cantor winning for some $0\leq\gamma<\alpha.$
\end{definition}
The following lemma is taken from \cite[Theorems 3.4, 4.1]{BHNS}, \cite[Theorem 21]{BNY21}.
\begin{lemma}{\cite[Theorem 21]{BNY21}}\label{non-empty}
    If $S\subset \supp \mu$ is Cantor winning for some Ahlfors regular measure $\mu$ as in Definition \ref{cantor game}, then $S$ is non-empty.
\end{lemma}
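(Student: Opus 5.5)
The plan is to show that, whatever strategy Alice uses, Bob always has a legal move. Then Alice never wins by default. Since $S$ is Cantor winning, Alice has a strategy guaranteeing that the outcome $\bigcap_N B_N$ lies in $S$. Playing the game against that strategy gives a point of $S$, so $S\neq\emptyset$.

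\textbf{Step 1 (reduction to small $\beta$).} Fix $\gamma<\alpha$ such that $S$ is $\gamma$-Cantor winning, and choose $\eta$ with $\gamma<\eta<\alpha$. I would first reduce to the case where $\beta$ is as small as we like. Play $k$ consecutive moves at once, i.e.\ view the game as one with parameter $\beta^k$. Bob then needs only to choose one ball every $k$ steps and can fill in the intermediate balls concentrically. The forbidden collections announced during a block of $k$ moves regroup into collections of radius $\beta^{k(N+1+i)}r_0$. Their cardinalities are bounded by $C_k\,\beta^{-k\gamma'(i+1)}$ for any fixed $\gamma'\in(\gamma,\eta)$, after absorbing the finitely many intermediate scales into the constant. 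So it suffices to handle $\beta$ small relative to the Ahlfors constants $A,\rho_0$ and to the exponents.

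\textbf{Step 2 (potential-function strategy for Bob).} I would give Bob a strategy based on a potential. For the current ball $B_N$ of radius $r_N=\beta^N r_0$, set
\[
\phi_N:=\sum_{A}\Bigl(\frac{\rho_A}{r_N}\Bigr)^{\eta}.
\]
The sum runs over all balls $A$ announced by Alice so far that have radius $\rho_A\le r_N$, have not yet reached Bob's current scale, and meet a fixed enlargement $2B_N$.

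By Ahlfors regularity, $B_N$ contains $M\ge c\,\beta^{-\alpha}$ balls of radius $\beta r_N$ centred in $\supp\mu$ whose doubles are pairwise disjoint. This comes from a maximal separated set together with the upper and lower measure bounds. A ball $A$ of radius $\rho_A\le\beta r_N$ meets the doubles of only $O(1)$ of these children. Passing to a child rescales each term of the potential by $\beta^{-\eta}$. Alice's new collections at step $N+1$ contribute at most
\[
\sum_{i\ge 0}\beta^{-\gamma(i+1)}\beta^{\eta(1+i)}=O\bigl(\beta^{\eta-\gamma}\bigr).
\]
Averaging over children then gives a child $B_{N+1}$ with
\[
\phi_{N+1}\le C\beta^{\alpha-\eta}\phi_N+C\beta^{\eta-\gamma}.
\]
For $\beta$ small this keeps $\phi_N\le\epsilon_0$ for all $N$, where $\epsilon_0<1$ is a fixed small constant.

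\textbf{Step 3 (legality of Bob's move).} The moves must be legal. Any forbidden ball $A\in\mathcal A_{N+1-l,l}$ that is due at step $N+1$ has radius exactly $r_{N+1}$. If such an $A$ met the chosen child, it would contribute a term equal to $1$ to $\phi_{N+1}$. That contradicts $\phi_{N+1}<1$, so $B_{N+1}$ is disjoint from all forbidden balls due at step $N+1$, and the move is legal.

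\textbf{Main obstacle.} The bookkeeping in the averaging step is where I expect the difficulty. One has to track threats of radius larger than $\beta r_N$, which may meet many children, and check that the enlargement $2B_N$ correctly captures every ball that can later become relevant. I would handle the large-threat case by noting that such balls carry weight at least $\beta^{\eta}$. Since $\phi_N\le\epsilon_0$, there are at most $\epsilon_0\beta^{-\eta}$ of them, so they cover at most a fraction $O(\beta^{\alpha-\eta})$ of the children. Bob can then discard those children first and average over the rest.
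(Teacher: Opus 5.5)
Your argument is correct, but it is not the paper's route. The paper gives no proof of this lemma; it imports it from \cite{BHNS} and \cite{BNY21}, where it comes out of the general theory of Cantor (potential) winning sets in Ahlfors regular spaces, a framework that also controls Hausdorff dimension. You instead give Bob an explicit strategy: always pass to the child of least potential. The recursion $\phi_{N+1}\le C\beta^{\alpha-\eta}\phi_N+C\beta^{\alpha-\gamma}$, with $\phi_0=0$, then keeps $\phi_N<1$. So Alice never wins by default, and the outcome of her winning strategy is a point of $S$. Your route is self-contained and uses only two Ahlfors-regular counting facts: at least $c\beta^{-\alpha}$ children with disjoint doubles, and bounded overlap for balls of radius at most $\beta r_N$. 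It delivers exactly the nonemptiness that Lemma~\ref{equivalence1} needs, and it shows plainly where $\gamma<\alpha$ is used. It also upgrades to a dimension bound. Keep every child whose potential stays below $\epsilon_0$; by Markov's inequality this is a positive proportion of them. This yields a Cantor set of outcomes of dimension at least $\alpha-O(1/\log(1/\beta))$.

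Some tidying is needed. First, drop Step~1. Bob chooses $\beta$, and Cantor winning quantifies over every $\beta\in(0,1)$, so no reduction is required. The blocking argument as sketched is also wrong: the radii $\beta^{N+1+i}r_0$ do not regroup into the scales $\beta^{kj}r_0$, and concentric filling ignores the legality constraints at the intermediate steps.

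Second, your ``main obstacle'' does not arise. After Bob's $N$-th move, every ball that still constrains him has radius $\beta^{j}r_0$ with $j\ge N+1$, hence at most $\beta r_N$. Each such ball meets the doubles of only $O(1)$ children.

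Third, say explicitly that $\phi_{N+1}$ counts the balls of radius exactly $r_{N+1}$; otherwise Step~3 has nothing to bite on. Also say that the averaging is applied to the newly announced balls too. The up to $\beta^{-\gamma}$ balls of $\mathcal{A}_{N+1,0}$ each have weight $1$ at the child's scale, so a single child can carry potential of order $\beta^{-\gamma}$. Only because each of them meets $O(1)$ of the $\gtrsim\beta^{-\alpha}$ children is their average contribution $O(\beta^{\alpha-\gamma})$. Your stated additive term $C\beta^{\eta-\gamma}$ is then a valid, if weaker, bound.
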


The following version of non-divergence estimates is a combination of \cite[Theorem 5.11, Lemma 5.17]{BNY22}.

\begin{theorem}\label{analytic-nde}
    Let $n, d \in \mathbb{N}$, $D > 0$, $U \subset \mathbb{R}^d$ be open, $h_0 : U \rightarrow \text{SL}_{n+1}(\mathbb{R})$ be a map such that every entry of $h_0$ is a real analytic function and let $\mu$ be a locally finite Borel measure, which is absolutely decaying and Ahlfors regular on $U$. Then for any $\mathbf z \in \supp \mu \cap U$ there exists a ball $B(\mathbf z) \subset U$ centred at $\mathbf z$ and constants $C, \gamma > 0$ such that for any ball $B \subset B(\mathbf z)$ centered in $\supp\mu$, any diagonal matrix $g \in \text{SL}_{n+1}(\mathbb{R})$ and any $0 < \rho \leq 1$ at least one of the following two conclusions holds for $h(x) = gh_0(x)$
\begin{enumerate}
    \item for all $\varepsilon > 0$
    \begin{equation}
        \mu(\{x \in B : h(x)\mathbb{Z}^{n+1} \notin K_\varepsilon\}) \leq C \left(\frac{\varepsilon}{\rho}\right)^{\gamma} \mu(B).
    \end{equation}
    \item there exists a primitive collection $\mathbf{v}_1, \dots, \mathbf{v}_r \in \mathbb{Z}^{n+1}$ with $1 \leq r \leq n$ such that
    \begin{equation}
        \sup_{x \in B} \|h(x)\mathbf{v}_1 \wedge \dots \wedge h(x)\mathbf{v}_r\| < \rho.
    \end{equation}
\end{enumerate}
\end{theorem}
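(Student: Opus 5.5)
The plan is to deduce the statement from the abstract quantitative nondivergence theorem of Kleinbock--Margulis and Kleinbock--Lindenstrauss--Weiss (as packaged in \cite[Theorem 5.11]{BNY22}), using \cite[Lemma 5.17]{BNY22} to supply the needed ``goodness'' of analytic functions with respect to $\mu$.

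\textbf{The abstract theorem.} Let $\tilde B$ be a ball centred in $\supp\mu$, let $\mu$ be doubling on $\tilde B$, and let $h:\tilde B\to \SL_{n+1}(\R)$. Assume that for every primitive collection $\bv_1,\dots,\bv_r$ the function
\[
\psi_\Delta(x):=\Vert h(x)\bv_1\wedge\dots\wedge h(x)\bv_r\Vert
\]
is $(C',\alpha)$-good on $\tilde B$ with respect to $\mu$. Assume further that $\sup_{x\in B\cap\supp\mu}\psi_\Delta(x)\ge\rho$ for all such $\Delta$, where $3^{n+1}B\subset\tilde B$. Then for all $\varepsilon>0$,
\[
\mu(\{x\in B:h(x)\Z^{n+1}\notin K_\varepsilon\})\le C(\varepsilon/\rho)^{\gamma}\mu(B),
\]
with $C$ and $\gamma$ depending only on $n$, $C'$, $\alpha$ and the doubling constant. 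Ahlfors regularity gives doubling, with constant $2^{\alpha}A^2$ at scales below $\rho_0$. If the sup hypothesis fails for some $\Delta$, then $\sup_{x\in B}\psi_\Delta$ is not literally controlled, since the sup is only over $\supp\mu$. I would either use continuity and restrict to $\supp\mu$ in conclusion (2), or, as in \cite{BNY22}, run the theorem with the sup over $B$, which only weakens the hypothesis needed. Either way the dichotomy (1)/(2) follows.

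\textbf{Uniform goodness.} The remaining task is to choose $B(\mathbf z)$ so that all $\psi_\Delta$ are $(C',\alpha)$-good on $3^{n+1}B$ for every $B\subset B(\mathbf z)$, uniformly in the diagonal $g$ and in $\Delta$. Expand $h(x)\bv_1\wedge\dots\wedge h(x)\bv_r$ in the standard basis of $\bigwedge^r\R^{n+1}$. Each coordinate is a diagonal entry of $\bigwedge^r g$ times an integer combination of $r\times r$ minors of $h_0(x)$. Hence every coordinate lies in the finite-dimensional space $\mathcal F$ spanned by the minors of $h_0$, which is independent of $g$ and $\Delta$. Since the Euclidean norm is comparable to the max norm, and a max of $(C',\alpha)$-good functions is $(C',\alpha)$-good, it suffices to show that every $f\in\mathcal F$ is uniformly $(C',\alpha)$-good on balls inside a small neighbourhood of $\mathbf z$ with respect to $\mu$. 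This is exactly the content of \cite[Lemma 5.17]{BNY22}, for analytic functions and absolutely decaying Ahlfors regular measures.

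\textbf{Main obstacle.} The hardest step is this uniform goodness, and I would prove it as follows. First pass to the quotient of $\mathcal F$ by the functions vanishing on $\supp\mu$ near $\mathbf z$. Those functions contribute nothing to $\mu$-measures or to sups over $\supp\mu$, so they can be discarded. On the unit sphere of the quotient (a compact set), apply the Kleinbock--Margulis compactness argument. Analyticity gives a local Łojasiewicz/Taylor-type estimate. Absolute decay plus Ahlfors regularity give the key nonconcentration fact: a nonzero analytic function small on a $\mu$-large portion of a ball forces a sublevel set lying near a hyperplane-like set, which absolute decay rules out. Together these yield uniform constants $C'$ and $\alpha$ on a ball $B(\mathbf z)$ with $3^{n+1}B(\mathbf z)\subset U$. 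The delicate point is that $\supp\mu$ may sit inside an analytic subvariety, which is why the reduction to the quotient of $\mathcal F$ is essential. With that done, the dichotomy follows from the abstract theorem above.
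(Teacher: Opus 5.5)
Your route is the paper's: the paper gives no argument beyond citing \cite[Theorem 5.11, Lemma 5.17]{BNY22}, that is, Kleinbock--Margulis/KLW nondivergence combined with uniform goodness of analytic functions. Your reduction is also the right one: the coordinates of $h(x)\bv_1\wedge\cdots\wedge h(x)\bv_r$ are scalar multiples of elements of the span $\mathcal F$ of the $r\times r$ minors of $h_0$, independently of $g$ and of the collection.

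\textbf{The gap: reconciling the two suprema.} The abstract theorem, with goodness normalised by $\sup_{B\cap\supp\mu}$, only yields the alternative ``(1), or $\sup_{x\in B\cap\supp\mu}\psi_\Delta<\rho$ for some $\Delta$''. The statement asserts (2) with $\sup_{x\in B}$, which is stronger. This is the form the paper needs: in Lemma \ref{main-lemma} and Propositions \ref{affine-global} and \ref{nondeg-global}, the lower bounds are proved for suprema over Euclidean balls, e.g.\ by evaluating at points such as $\tilde\bx_N\pm\beta^Nr_0\be_{i_0}$ that need not lie in $\supp\mu$. Neither of your two fixes works.
\begin{enumerate}
\item Restricting (2) to $\supp\mu$ proves a different, weaker theorem.
\item Running the abstract theorem under the hypothesis $\sup_B\psi_\Delta\ge\rho$ does not ``only weaken the hypothesis needed''. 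A weaker hypothesis means a stronger theorem, and that theorem does not follow from $\supp\mu$-normalised goodness.
\end{enumerate}

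\textbf{What is missing.} You need a scale-uniform comparison: some $c>0$ with $\sup_{B\cap\supp\mu}|f|\ge c\sup_B|f|$ for all $f\in\mathcal F$ and all balls $B\subset B(\mathbf z)$ centred in $\supp\mu$. This follows from goodness normalised by the full-ball supremum, $\mu(\{x\in B:|f(x)|<\epsilon\})\le C'(\epsilon/\sup_B|f|)^{\alpha}\mu(B)$, by taking $\epsilon=c\sup_B|f|$ with $C'c^{\alpha}<1$. Taking the maximum over coordinates then gives $\sup_{B\cap\supp\mu}\psi_\Delta\gg\sup_B\psi_\Delta$. Applying the abstract theorem with $\rho$ replaced by a fixed multiple of $\rho$ yields the stated dichotomy, with an enlarged $C$.

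\textbf{Your quotient step points the wrong way.} If some nonzero $f\in\mathcal F$ vanished on $\supp\mu$ near $\mathbf z$, the two suprema would be incomparable. The dichotomy would then say nothing about (2) as stated, so discarding such $f$ throws away exactly what the $\sup_B$ version needs. For absolutely decaying $\mu$ this situation does not occur: $\mu$ gives zero mass to proper analytic subsets. Indeed, at a $\mu$-density point $x$ of a stratum of such a set, the stratum lies in an $o(r)$-neighbourhood of a hyperplane inside $B(x,r)$, which contradicts absolute decay. So the quotient is trivial, and $\supp\mu$ lying in an analytic subvariety is not the delicate point. The real content is full-ball-normalised goodness of $\mathcal F$, uniform both in $f$ and in the ball (centre and scale). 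That is what you must take from \cite[Lemma 5.17]{BNY22}. Your compactness sketch over the unit sphere of $\mathcal F$ does not show where uniformity in scale comes from, so there you should rely on the citation rather than the sketch.
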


We finish this  section with \mbox{
\cite[Lemma 22]{BNY21}}\hskip0pt
.
 \begin{lemma}\label{covering}
    Let $\mu$ be an Ahlfors regular measure in $\R^d,$ with parameters $A, \alpha, r_0$ as in Definition \ref{ahlfors}. Let $S\subset \supp \mu$, and $0<r<r_0$ with $\mu(B(S, r))<\infty.$ Then there exists a cover of $S\cap \supp\mu$ with balls of radius $3r$ of   cardinality at most

\[
 A \mu(B(S,r))/r^{\alpha}.
\]

\end{lemma}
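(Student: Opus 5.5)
The statement to prove is Lemma \ref{covering}: for $S\subset\supp\mu$ and $0<r<r_0$ with $\mu(B(S,r))<\infty$, the set $S$ has a cover by balls of radius $3r$ whose number is at most $A\mu(B(S,r))/r^\alpha$. The plan is a Vitali-type covering argument combined with the lower Ahlfors bound in \eqref{ahlfors property}.

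First, choose a maximal $2r$-separated subset $\{\bx_j\}_{j\in J}\subset S$. By Zorn's lemma such a set exists, meaning $\Vert\bx_j-\bx_k\Vert\geq 2r$ for $j\neq k$. Maximality gives that every $\bx\in S$ lies within distance $2r$ of some $\bx_j$. Hence the balls $B(\bx_j,3r)$ cover $S$; they even cover the closed $2r$-neighbourhood, so we have room to spare.

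Next, bound $|J|$. Because the points are $2r$-separated, the open balls $B(\bx_j,r)$ are pairwise disjoint. Each of them is contained in $B(S,r)$, since its centre lies in $S$. Each centre also lies in $\supp\mu$ and $r<r_0$, so the lower bound in \eqref{ahlfors property} gives $\mu(B(\bx_j,r))\geq A^{-1}r^\alpha$. For any finite subfamily $J'\subset J$, disjointness then yields
\[
|J'|\,A^{-1}r^\alpha\leq \sum_{j\in J'}\mu(B(\bx_j,r))\leq \mu(B(S,r))<\infty .
\]
So $J$ is finite, with $|J|\leq A\mu(B(S,r))/r^\alpha$, which is exactly the claimed bound.

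There is no real obstacle here. The only points needing care are the following:
\begin{itemize}
\item The separated set must be taken inside $S\subset\supp\mu$, so that the Ahlfors lower bound applies at each centre.
\item One must check whether balls are taken open or closed; with separation exactly $2r$, open balls of radius $r$ are disjoint.
\item Finiteness of $J$ is not assumed in advance; it comes out of the measure bound, which is why the hypothesis $\mu(B(S,r))<\infty$ is needed.
\end{itemize}
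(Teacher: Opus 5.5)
Your argument is correct and is essentially the standard Vitali-type proof. The paper gives no proof of its own; it simply quotes \cite[Lemma 22]{BNY21}, which rests on the same idea: take a maximal disjoint family of balls $B(\bx_j,r)$ centred in $S$ (equivalently your maximal $2r$-separated set), note that the enlargements $B(\bx_j,3r)$ cover $S$, and count them via the lower Ahlfors bound inside $B(S,r)$. Keep the observation that your centres lie in $S\subset\supp\mu$: this makes the covering balls balls of $X=\supp\mu$ centred in $X$, which the Cantor game in Definition~\ref{cantor game} requires.
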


\section{Global estimate for affine subspaces}\label{sec: global affine}

Let $\cL$ be parametrized as in \eqref{eq: para of L} as in \S \ref{sec: intro}. We denote $\phi(\bx)=A\bx+\mathbf{b}$ and $\tilde{\phi}(\bx)=(1,\bx,\phi(\bx))=(1,\bx,A\bx+\mathbf{b}),$ where $A\in \R^{m\times d},\mathbf{b}\in\R^{1\times m}.$ Let us denote $A_{\mathbf{b}}:=\begin{bmatrix}
        \mathbf{b}\\
        A^\top 
    \end{bmatrix}$ a  matrix of size  $(d+1)\times m$.
We write $\bx$ for both column and row vector with   a slight  abuse of notation.

\begin{equation}\label{eqn: u-star(x)}
        u^\star(\bx)=\begin{bmatrix}
            1 & -\bx & \mathbf{0}\\
            0 & \mathrm{I}_d & \mathbf{0} \\
            0 & 0   & \mathrm{I}_m
        \end{bmatrix}\,.
    \end{equation}

Then 

\[
 u_1^\star(\bx)=u^\star(\bx) \begin{bmatrix}
    1 & \mathbf{0} & -\mathbf{b}\\
    0 & \mathrm{I}_{d} & A^\top\\
    0 & 0 & \mathrm{I}_{m}
\end{bmatrix}=\begin{bmatrix}
            1 & -\bx & -(A\bx+\mathbf{b})\\
            0 & \mathrm{I}_d & A^\top \\
            0 & 0   & \mathrm{I}_m
        \end{bmatrix}\,
\]

We will define 
\[
C_{A,\mathbf{b}}^*=\begin{bmatrix}
    1 & \mathbf{0} & -\mathbf{b}\\
    0 & \mathrm{I}_{d} & A^\top\\
    0 & 0 & \mathrm{I}_{m}
\end{bmatrix}.
\]

We have the following \textit{Dani-type} correspondence characterization of weighted Diophantine property. Let 
\[
\begin{matrix}
    b_{e}(T)=\diag\left(\underbrace{e^T,\ldots,e^T}_{d+1\text{ entries}},e^{-Tw_{d+1}},\ldots,e^{-Tw_n}\right),\qquad T\in\R.
\end{matrix}
\]

We have the following \textit{Dani-type} correspondence characterization of weighted Diophantine property.
\begin{lemma}\label{lem: dani0}
    For any $(w_{d+1},\cdots, w_{n})\in \R_{> 0}^{m}$ with $w_{d+1}\geq\cdots\geq w_n,$ if $A_{\mathbf{b}}$ is  of $(w_{d+1},\cdots,w_n)$ -weighted Diophantine type  $0<\kappa<1$, then, for all $T\geq0$, 
 \[
\lambda_1(b_{e}(T)C_{A,\mathbf b}^*\Z^{n+1})\gg e^{\frac{w_n(1-\kappa)}{\kappa+w_n}T}.
\]
 In particular,  taking  $\eta_{A_{\mathbf b}}=\frac{w_n(1-\kappa)}{2(\kappa+w_n)}>0$ , we have, for all sufficiently  large $T>0$,
    \begin{equation}
       \lambda_1( b_{e}(T)C^\star_{A,\mathbf{b}}\Z^{n+1})> e^{\eta_{A_{\mathbf b}} T}.
    \end{equation}
\end{lemma}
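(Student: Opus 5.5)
The plan is to compute the lattice $b_e(T)C^\star_{A,\mathbf b}\Z^{n+1}$ explicitly and bound each nonzero vector from below by splitting according to the size of its last $m$ integer coordinates. Write a general integer vector as $(p,\bq_1,\bq_2)$ with $p\in\Z$, $\bq_1\in\Z^d$, $\bq_2\in\Z^m$. Applying $C^\star_{A,\mathbf b}$ gives $(p-\mathbf b\cdot\bq_2,\ \bq_1+A^\top\bq_2,\ \bq_2)$. Up to the sign convention on the first coordinate, the first $d+1$ coordinates are therefore $A_{\mathbf b}\bq_2+\bq_0$ with $\bq_0\in\Z^{d+1}$. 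After applying $b_e(T)$, the vector becomes
\[
\bigl(e^{T}(A_{\mathbf b}\bq_2+\bq_0),\ e^{-Tw_{d+1}}q_{2,1},\ldots,e^{-Tw_n}q_{2,m}\bigr).
\]
I will bound its norm from below by the maximum of the norm of the top block and the absolute values of the individual bottom coordinates.

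\emph{Case $\bq_2=\mathbf 0$.} Then $\bq_0\neq\mathbf 0$ is an integer vector, so the norm is at least $e^{T}$. Since $0<\kappa<1$, the exponent $\theta:=\frac{w_n(1-\kappa)}{\kappa+w_n}$ satisfies $\theta<1$, so this case gives more than the required bound.

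\emph{Case $\bq_2\neq\mathbf 0$.} Put $Q:=\Vert\bq_2\Vert_{(w_{d+1},\ldots,w_n)}=\max_i|q_{2,i}|^{1/w_{d+i}}$. Then $Q\geq1$, and we write $Q=e^{sT}$ with $s\geq0$. By the weighted Diophantine hypothesis (Definition~\ref{def: weighted Diophantine}) there is $c>0$ with $\Vert A_{\mathbf b}\bq_2+\bq_0\Vert\geq cQ^{-\kappa}$. Hence the top block has norm at least $c\,e^{T(1-\kappa s)}$. For an index $i$ attaining the maximum in $Q$, the corresponding bottom coordinate has absolute value $e^{-Tw_{d+i}}Q^{w_{d+i}}=e^{Tw_{d+i}(s-1)}$.
\begin{itemize}
\item If $s\leq1$, the top block alone gives at least $c\,e^{T(1-\kappa)}$. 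This dominates $e^{\theta T}$, because $1-\kappa\geq\theta$ is equivalent to $\kappa+w_n\geq w_n$.
\item If $s\geq1$, then, using $w_{d+i}\geq w_n$ and $s-1\geq0$, the norm is at least $\max\bigl(c\,e^{T(1-\kappa s)},e^{Tw_n(s-1)}\bigr)$. The first term decreases in $s$ and the second increases, so the maximum is minimized where $1-\kappa s=w_n(s-1)$. This happens at $s=\frac{1+w_n}{\kappa+w_n}$, where the common exponent is $w_n(s-1)=\frac{w_n(1-\kappa)}{\kappa+w_n}=\theta$.
\end{itemize}
So in every case the norm is $\gg e^{\theta T}$, with implied constant $\min(c,1)$ independent of $T$ and of the vector. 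This proves the first claim.

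For the ``in particular'' statement, set $\eta_{A_{\mathbf b}}=\theta/2$. Then $\min(c,1)e^{\theta T}>e^{\theta T/2}$ as soon as $T>\frac{2}{\theta}\log(1/\min(c,1))$, which gives the strict inequality for all large $T$.

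The only step that needs care is the balancing in the case $s\geq1$. There one must use the ordering $w_{d+1}\geq\cdots\geq w_n$ to replace the unknown weight $w_{d+i}$ by the worst one, $w_n$, and check that the regime $s<1$ never gives anything worse. Everything else is a direct computation.
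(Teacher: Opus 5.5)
Your proof is correct and is essentially the paper's argument. The paper also bounds the norm below by the larger of the top block $e^{T}\Vert A_{\mathbf b}\bq_2+\bq_0\Vert$ (up to sign) and the scaled bottom coordinates, and splits according to whether the bottom block is below $e^{\theta T}$ (with your $\theta$); that case forces $\Vert\bq_2\Vert_{(w_{d+1},\ldots,w_n)}<e^{T(1+\theta/w_n)}=e^{T\frac{1+w_n}{\kappa+w_n}}$, so the paper's threshold is exactly your balancing point $s=\frac{1+w_n}{\kappa+w_n}$, and the only cosmetic issue in your write-up is that $Q=e^{sT}$ is undefined at $T=0$, where the bound is trivial anyway because some bottom coordinate is a nonzero integer.
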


\begin{proof}
    Consider $\bv=\begin{pmatrix}
        \mathbf p\\\mathbf q
    \end{pmatrix}\in\mathbb Z^{n+1}\setminus\{\mathbf 0\},$ where $\mathbf p\in\Z^{d+1},\mathbf q=(q_{d+1},\cdots,q_n)\in\mathbb Z^{m}.$ Then  \begin{equation}\Vert b_{e}(T)C^\star_{A,\mathbf{b}}\bv\Vert\geq\max\{e^T\Vert\mathbf p+\widehat A_{\mathbf b}\mathbf q\Vert,\max_{i=d+1,\cdots,n}|e^{-Tw_i}q_i|\}.\label{eqn: lambda1-bound}\end{equation}   Here $\widehat A_{\mathbf b}=\diag(-1,1,\ldots,1)A_{\mathbf b}$ has the same weighted Diophantine type as $A_{\mathbf b}$. If $\mathbf q=0$, the left-hand side is at least $e^T$, so assume $\mathbf q\neq0$. If  $\max_{i=d+1,\cdots,n}|e^{-Tw_i}q_i|<e^{\frac{1-\kappa}{1+\frac{\kappa}{w_n}}T},$ we have 
\[
\Vert\mathbf q\Vert_{(w_{d+1},\cdots,w_n)}<e^{T+\frac{1-\kappa}{w_n+\kappa}T}
\]
by the assumption that $w_n$ is the smallest entry.

    Since $A_{\mathbf{b}}$ is $(w_{d+1},\cdots, w_n)$-weighted Diophantine type $\kappa,$ we have 
 \[
e^T\Vert\mathbf p+\widehat A_{\mathbf b}\mathbf q\Vert\gg e^T\Vert \mathbf q\Vert_{(w_{d+1},\cdots,w_n)}^{-\kappa}\geq e^{\frac{w_n(1-\kappa)}{\kappa+w_n}T}.
\] 
 Then the proof finishes by \eqref{eqn: lambda1-bound}.
\end{proof}
\begin{remark}
    It is worth pointing out that in the above lemma the conclusion is true if we replace $b_{e}(T)$ by $b_{b}(T)=\diag(b^{T},\cdots, b^T, b^{-w_{d+1}T},\cdots, b^{-w_n T}), b>1$. This is because $b_{e}(T\log b)=b_{b}(T)$, hence the above lemma applied to $T\log b$ implies $$\lambda_1(b_{b}(T)C^\star_{A,\mathbf{b}}\Z^{n+1})\gg b^{\frac{w_{n}(1-\kappa)}{\kappa+w_{n}}T}.$$ We will later use $b_{b}(T)$ abbreviated as $b(T).$

\end{remark}

Let us note that using  the definitions  of $d_{l}, a_{k}$ we get the following: 

\[
\begin{aligned}
d_la_k=\diag\Bigl(&b^k b^{-\frac{lt(1+w_1)}{n+1}},
b^{-w_1k}b^{\frac{(1+w_1)(n+1-t)l}{n+1}},\ldots,
b^{-w_tk}b^{\frac{(1+w_1)(n+1-t)l}{n+1}},\\
&b^{-w_{t+1}k}b^{-\frac{lt(1+w_1)}{n+1}},\ldots,
b^{-w_nk}b^{-\frac{lt(1+w_1)}{n+1}}\Bigr).
\end{aligned}
\]

The following proposition is the heart of this paper. The proof is quite long. We point to several subparts to make it more readable. The proof of the following proposition in the unweighted set-up can be compared with \cite[Lemma 4.4]{Shah_Yang_2024}. But in the weighted set-up, more delicate analysis of the coefficients of the higher rank wedge vectors is needed.

\begin{proposition}\label{affine-global}
   Let $\bw=(w_i)_{i=1}^n$ be a weight in $\R^n$. If $A_{\mathbf{b}}$ is  $(w_{d+1},\cdots,w_{n})$ -weighted Diophantine type $0<\kappa<1$, then for sufficiently large $s$, for all  integers $l\geq1$ and  $k\geq sl$, for every sufficiently small ball $B\subset \bU$ be a ball of  radius $r(B)<1$, such that for any primitive collection $\bv_1,\cdots, \bv_r\in \Z^{n+1}$ with  $1\leq r\leq n+1$ and $\bv:=\bv_1\wedge\cdots\wedge\bv_r\neq\mathbf0$ , we get that 
    \begin{equation}
        \sup_{\bx\in B}\Vert d_l a_{k} u_1^\star(\bx)\bv\Vert\geq \rho_{B}=r(B) 2^{-n^2}.
    \end{equation}
\end{proposition}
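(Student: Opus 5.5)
Throughout I write $g=d_la_k$ and $D=r(B)$. The plan is to compute $g\,u_1^\star(\bx)\bv$ explicitly, using the factorization $u_1^\star(\bx)=u^\star(\bx)C^\star_{A,\mathbf b}$, and then split into cases according to which basis wedges carry the large coefficients.

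\textbf{Step 1: reduce to an expansion in $\bx$.} Write $\bw':=C^\star_{A,\mathbf b}\bv=\sum_{I}c_I\be_I$ in the basis $\be_I=\be_{i_1}\wedge\cdots\wedge\be_{i_r}$. The matrix $u^\star(\bx)$ fixes $\be_0$ and sends $\be_j\mapsto\be_j-x_j\be_0$ for $1\le j\le d$. It fixes the remaining $\be_j$, namely those with $j>d$. Hence each coordinate of $u^\star(\bx)\bw'$ is an affine function of $\bx$: for $I\not\ni 0$ the coordinate at $\be_I$ is the constant $c_I$, while for $I\ni0$ the coordinate is $c_I-\sum_{j}\pm x_jc_{(I\setminus\{0\})\cup\{j\}}$. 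The diagonal element $g$ multiplies the $\be_I$-coordinate by $\chi_I:=\prod_{i\in I}g_{ii}$. The diagonal entries of $d_la_k$ are written out just before the statement. Using $k\ge sl$ with $s$ large, the $a_k$ factor dominates, and
\[
\chi_I\approx b^{k(\mathbbm 1_{0\in I}-\sum_{i\in I,i\ge1}w_i)}\cdot b^{O(l)}.
\]
Since an affine function's supremum over a ball of radius $D$ controls its linear coefficients up to a factor $D^{-1}$, it suffices to show that some coefficient of the expansion, weighted by $\chi_I$ and multiplied by $D\,2^{-n^2}$, is at least $\rho_B$. The factor $2^{-n^2}$ absorbs the combinatorial constants.

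\textbf{Step 2: case split on whether some $c_J\ne0$ has $J$ meeting $\{1,\dots,d\}$.}
\begin{itemize}
\item \emph{Case A: such a $J$ exists.} Take $I=(J\setminus\{j\})\cup\{0\}$ for some $j\in J\cap\{1,\dots,d\}$. The coefficient of $x_j$ in the $\be_I$-coordinate is $\pm c_J$, weighted by $\chi_I$, and $\chi_I$ contains the expanding factor $b^{k}$ coming from index $0$. I expect $c_J$ to be essentially an integer minor of $\bv$, possibly perturbed by entries of $A^\top$, and this is where Step 3 comes in. In the favorable situation the weighted coefficient is $\gg b^{k(1-\sum w)}\ge 1$, since $\sum_{i\in I\setminus 0}w_i<1$ whenever $r\le n$. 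The supremum is then $\gg D$.
\item \emph{Case B: every nonzero $c_J$ has $J\subset\{0\}\cup\{d+1,\dots,n\}$.} The wedge then lives in the span of $\be_0,\be_{d+1},\dots,\be_n$, and $u^\star(\bx)$ acts trivially on these coordinates. So it suffices to bound the constant $\|g\bw'\|$ from below.
\end{itemize}

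\textbf{Step 3: lower bound in Case B (main obstacle).} This is where the Diophantine hypothesis on $A_{\mathbf b}$ enters. The key tool is Lemma~\ref{lem: dani0} and the remark following it: after splitting off the $d_l$ contribution, $g\,C^\star_{A,\mathbf b}\Z^{n+1}$ is a bounded distortion of $b(k)C^\star_{A,\mathbf b}\Z^{n+1}$. That lattice has $\lambda_1\ge b^{\eta k}$ with $\eta=\eta_{A_{\mathbf b}}>0$. I would apply Minkowski's second theorem to the sublattice spanned by $gC^\star\bv_1,\dots,gC^\star\bv_r$. The minima of the sublattice are at least those of the full lattice, so the covolume satisfies $\|g\bw'\|\gg\lambda_1^{\,r}\ge b^{r\eta k}$. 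The danger is the $d_l$ factor, which contracts index $0$ by $b^{-lt(1+w_1)/(n+1)}$ and also contracts the indices $>t$. This contraction is at most $b^{O(l)}$. Choosing $s$ large, depending on $\eta$ and $n$, gives $b^{r\eta k-O(l)}\ge b^{\eta k/2}\ge1\ge\rho_B$.

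The delicate point in Case A is that $c_J$ need not be a nonzero integer: coordinates of $\bw'$ mix integer minors with entries of $A^\top$. I would handle this by a secondary split. Either the relevant minors $c_J$ are $\ge b^{-\eta k/2}$, which suffices by the margin $b^{k(1-\sum w)}$ together with $k\ge sl$. Or all of them are small. In that case $\bw'$ is close to the span used in Case B, and the Minkowski bound of Step 3, with a perturbation error, still applies.

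Getting the weighted exponents in $\chi_I$ to align with the weighted exponent $\kappa$ is the heart of the argument. This includes the indices $j\le t$ versus $j>t$ singled out by $d_l$, and the ordering $w_1\ge\cdots\ge w_n$ is used there to compare $\chi_I$ across different choices of $I$. That comparison is where I expect the real work to lie.
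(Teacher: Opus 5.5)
Step 1 and your two clean cases are essentially sound. The proof breaks in the situation you dispose of by perturbation, and that situation is the generic one: every $c_J$ with $J\cap\{1,\ldots,d\}\neq\emptyset$ is small but nonzero.

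The Diophantine input, Lemma~\ref{lem: dani0}, bounds $\lambda_1(b(T)C^\star_{A,\mathbf b}\Z^{n+1})$ from below. You use it only at $T=k$, obtaining $\Vert b(k)\bw'\Vert\gg b^{r\eta k}$. As written, Step 3 instead applies Minkowski to a sublattice of $gC^\star_{A,\mathbf b}\Z^{n+1}$. That lattice is unimodular, so nothing follows. It is also not a bounded distortion of $b(k)C^\star_{A,\mathbf b}\Z^{n+1}$, since $a_k$ and $b(k)$ differ by $b^{(1+w_j)k}$ on $\be_j$ for $1\le j\le d$. Case B survives only because $a_k=b(k)$ on $\be_0,\be_{d+1},\ldots,\be_n$.

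In the mixed case, $b(k)$ multiplies $\be_J$ by $b^{k(\#(J\cap\{0,\ldots,d\})-\sum_{j\in J,\,j>d}w_j)}\geq b^{kw_n}$ whenever $J$ meets $\{0,\ldots,d\}$. So the part of $\bw'$ you discard is blown up, and the lower bound on $\Vert b(k)\bw'\Vert$ may come entirely from it. It then says nothing about the component in $\bigwedge^r\operatorname{span}(\be_0,\be_{d+1},\ldots,\be_n)$.

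A sharper threshold does not rescue this at the single time $k$. Each coordinate of $gu^\star(\bx)\bw'$ is affine in $\bx$. So the hypothesis $\sup_{\bx\in B}\Vert gu^\star(\bx)\bw'\Vert<\rho_B$ yields only $|c_I|\lesssim\chi_I^{-1}$ together with your derivative bounds $|c_J|\lesssim\chi_{(J\setminus\{j\})\cup\{0\}}^{-1}$. After applying $b(k)$, these keep the components with exactly one index in $\{0,\ldots,d\}$ at size $b^{O(l)}$. For components with two or more such indices, however, the best available bound is still at least of order $b^{k}$. That already exceeds $b^{r\eta k}$ when $r=2$. The comparison you defer to the end is therefore not bookkeeping: an ingredient is missing.

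The paper supplies two ingredients.

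First, it splits $C^\star_{A,\mathbf b}\bv=\sum_i\bw_i$ by the number $i$ of indices in $\{0,\ldots,d\}$. Then $\bw_0$ is a nonzero integral vector. It is integral because $C^\star_{A,\mathbf b}$ is the identity modulo $\operatorname{span}(\be_0,\ldots,\be_d)$, and nonzero because the $\bw_i$ with $i\geq1$ are forced to be small. The components with $i\geq2$ are controlled by the Shah--Yang inequality $\Vert b(T)\bw_i\Vert\,\Vert b(T)\bw_0\Vert\ll\Vert b(T)\bw_{i-1}\Vert\,\Vert b(T)\bw_1\Vert$. This inequality uses that $\bv$ is decomposable.

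Second, it changes the time. The inequality only helps once $\bw_1$ is dominated by $\bw_0$, and at $T=k$ one knows only $\Vert b(k)\bw_0\Vert\geq b^{-k}$. So the paper moves to an intermediate time $T^\star\le k$, still with $T^\star\gg k$. This is where the decreasing $\Vert b(T)\bw_0\Vert$ meets the increasing bound $W_1(T)$ on $\Vert b(T)\bw_1\Vert$. At that time $\Vert b(T^\star)C^\star_{A,\mathbf b}\bv\Vert\ll b^{\beta^\star rl}$. Minkowski then gives a vector of $b(T^\star)C^\star_{A,\mathbf b}\Z^{n+1}$ of length $\ll b^{\beta^\star l}<b^{\eta T^\star}$ for large $s$, contradicting Lemma~\ref{lem: dani0}.

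Your plan contains neither a decomposability constraint on the wedge nor a change of time, and without them the mixed case does not close.
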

\begin{proof}
  The case $r=n+1$ follows since $d_la_{k}u_1^\star(\cdot)\in \mathrm{SL}_{n+1}(\R)$. We prove the remaining cases by contradiction. Suppose contrary to the conclusion of the proposition, there is a sequence with $s\to\infty$, $l\geq1$,  $k\geq sl$, a ball $B$ of radius $r(B)$ and  $\mathbf0\neq\bv=\bv_1\wedge\cdots\wedge\bv_r\in\bigwedge^r\Z^{n+1}$, with $1\leq r\leq n$ fixed after passing to a subsequence, such  that \begin{equation*}
        \sup_{\bx\in B}\Vert d_l a_{k} u_1^\star(\bx)\bv\Vert<\rho_{B}.
    \end{equation*}

The diagonal matrix 
 \[
g:=d_l a_k=\diag(b_0,b_1,\cdots, b_n)\in \mathrm{SL}_{n+1}(\R), b_1=\cdots=b_t=b^{-w_1 k}b^{\frac{(1+w_1)(n+1-t)l}{n+1}}, 1\leq t\leq d,
\] 
 $b_0=b^k b^{\frac{-lt(1+w_1)}{n+1}}$, $b_{t+1}\leq \cdots\leq b_{n}$, $b_{t+j}=b^{-w_{t+j}k}  b^{\frac{-lt(1+w_1)}{n+1}}, 1\leq j\le n-t.$  If $t<d$, take $s>(1+w_1)/(w_1-w_{t+1})$, so that $b_1\leq b_{t+1}$ within the first $d$ coordinates.

Then for $\bx=(x_1,\cdots,x_d)\in \R^d$, 
\[
g u^\star(\bx) g^{-1}=u^\star(\theta x_1,\cdots, \theta x_t, \theta_{t+1} x_{t+1},\cdots, \theta_{d} x_{d}),
\]

where 
\[
\begin{aligned}
    \theta= b_0 b_1^{-1}, \theta_{t+j}=b_0 b_{t+j}^{-1}, \quad t+j\leq d .
\end{aligned}
\]
To   simplify  notation, we denote the right-hand matrix in the above equation as 

$u^\star(\Theta \bx), \Theta=(\theta,\cdots, \theta, \theta_{t+1}, \cdots, \theta_{d})$. 
Using $u_1^\star(\bx)=u^\star(\bx) C^\star_{A,\mathbf{b}}$ and the above conjugation, we have 
\begin{equation}\label{eq: reverse}
   \sup_{\bx\in B(\mathrm{0}, r(B))}\Vert  u^\star(\Theta\bx) gC^\star_{A,b} (\bv_1\wedge\cdots\wedge\bv_r)\Vert \le \rho_{B}
\end{equation}
   Denote $V_1$ to be the span of $\be_0,\be_1,\cdots, \be_d\in \R^{n+1}$, and $V_2$ to be the span of $\be_{d+1},\cdots, \be_n\in \R^{n+1}.$

 Then we have 

\[
\bigwedge^{r}\R^{n+1}=\bigoplus_{i=0}^{d+1} \bigwedge^i V_1\bigotimes \bigwedge^{r-i}V_2.
\]

From the above, we can write 

\[
 C^\star_{A, \mathbf{b}}(\bv_1\wedge\cdots \wedge \bv_r)=\sum_{i=0}^{d+1} \bw_i,
\]
 where $\bw_i$ is a linear combination of $\sum_{J,  \# J=i}\bigwedge_{j\in J,}\be_{j}\wedge \tilde{\bw}_{J}^{(r-i)}$, and $\tilde{\bw}_{J}^{(r-i)}\in \bigwedge^{r-i} V_2.$
It is worth pointing out that $\bw_i=0$ if $r-i>m\iff i<r-m.$
Note that 
  \begin{equation}\label{eq: compare norms}1\leq \Vert \bv\Vert \ll_{A,\mathbf{b}}\Vert C^\star_{A,\mathbf{b}}\bv\Vert\asymp_n\max_{0\leq i\leq d+1}\Vert\bw_i\Vert.\end{equation}


 First we take $B$ to be centered at $\mathbf0$, just for the simplicity of the arguments. The case where $B$ is not centered at $\mathbf{0}$ is addresses at the end of the proof. 
 
\textbf{Norm of $\bw_i$:} Note that $u^\star(\bx)\be_i=\be_i-x_i\be_0, 1\leq i\leq d.$
We know $$\bw_i=\sum_{J,  \# J=i}\bigwedge_{j\in J,}\be_{j}\wedge \tilde\bw_{J}^{(r-i)},$$ where $J\subset \{0,\cdots, d\}.$
If $r-i>m$, then $\bw_i=\mathbf{0},$ hence we assume $r-i\leq m.$

Let us take $J, 0\in J$. 
\[ 
 \sup_{\bx\in B(\mathrm{0}, r(B))}\Vert \sum_{J,  \# J=i}\bigwedge_{j\in J,}gu^\star(\bx)(\be_{j}\wedge \tilde\bw_{J}^{(r-i)})\Vert \leq \rho_{B}.
\] When $0\in J$, we have two cases, either $i-1\leq t$, or $i-1>t$. If $i-1>t$, by plugging $\bx=\mathbf{0}$ in the above, we get that 
$$\begin{aligned} 
&\Vert \tilde\bw_{J}^{(r-i)}\Vert\leq b_0^{-1} b_1^{-t} (b_{t+1}^{-1}\cdots b_{i-1}^{-1})\prod_{j=1}^{r-i} b_{d+j}^{-1}\rho_{B}\\
&= b^{k(w_1+\cdots+w_{i-1}+w_{d+1}+\cdots+w_{d+r-i}-1)} b^{-l\frac{1+w_1}{1+n}t(n+1-r)}\rho_{B}
\end{aligned}.$$

If $i-1\leq t$, by plugging $\bx=\mathbf{0}$ in the above, we get that 
$$\begin{aligned} 
&\Vert \tilde\bw_{J}^{(r-i)}\Vert\leq b_0^{-1} b_1^{-(i-1)} \prod_{j=1}^{r-i} b_{d+j}^{-1}\rho_{B}\\
&= b^{k(w_1+\cdots+w_{i-1}+w_{d+1}+\cdots+w_{d+r-i}-1)} b^{-l\frac{1+w_1}{1+n}((n+1)(i-1)-tr)}\rho_{B}
\end{aligned}.$$


 We have the following expansion:

\[ 
\sum_{J,  \# J=i}\bigwedge_{j\in J,}gu^\star(\bx)(\be_{j}\wedge \tilde\bw_{J}^{(r-i)})
=\sum_{0\notin J,  \# J=i}\bigwedge_{j\in J,}g(\be_j-x_j\be_0)\wedge\tilde\bw_{J}^{(r-i)})+\sum_{0\in J} g(\bigwedge_{j\in J} \be_j\wedge \tilde\bw_{J}^{(r-i)}).\] 
 Suppose $j_0\in \{1,\cdots, d\}$, then plugging $x_{j_0}=r(B)$, and all other $x_{j}=0$, 
\[\begin{aligned}&\sum_{0\notin J, j_0\in J,  \# J=i}\bigwedge_{j\in J\setminus{j_0},}g\be_{j}\wedge g(\be_{j_0}-r(B)\be_0)\wedge \tilde\bw_{J}^{(r-i)}+ \sum_{0, j_0\notin J,  \# J=i}\bigwedge_{j\in J}g\be_{j} \wedge \tilde\bw_{J}^{(r-i)}\\ & ~~~~~~~~~~~~~~~~~~~~~~~~~~~+\sum_{0\in J} g(\bigwedge_{j\in J} \be_j\wedge \tilde\bw_{J}^{(r-i)})\end{aligned}.\]
Now using orthogonality, we conclude from \eqref{eq: reverse} that for any $J$ with $0\notin J$ and containing $j_0$,
\[\left\Vert -r(B) g\left(\wedge_{j\in J\setminus \{j_0\}}\be_j\wedge \be_0\wedge\tilde\bw_{J}^{(r-i)}\right)+g\left(\wedge_{j\in J\setminus\{j_0\}}\be_j\wedge\be_0\wedge \wedge \tilde\bw_{J\setminus\{j_0\}\cup\{0\}}^{(r-i)}\right)\right\Vert \leq \rho_{B}\]

using triangle inequality, we get that for any such $J$
\[\begin{aligned}\Vert \tilde\bw_{J}^{(r-i)}\Vert 
\leq 2 \rho_{B}r(B)^{-1} b^{k(\sum_{j\in J\setminus\{j_0\}} w_{j}+w_{d+1}+\cdots +w_{d+r-i}-1)} b^{l\eta_1},\end{aligned}\] for some $\eta_1=\frac{1+w_1}{1+n}\max\{-t(n+1-r), -((n+1)(i-1)-tr)\} \in \R$.
From the above discussion and bounds of $\Vert\tilde\bw_{J}^{(r-i)}\Vert $, we may consider $s$ to be large enough (independent on $B$) such that 
\begin{equation}\label{eq: norm of w_i}\Vert \bw_{i}\Vert\ll_{A,\mathbf{b}} 1,\end{equation} where the implied constant appears in \eqref{eq: compare norms}.


\textbf{Norm of $\bw_1$:} For the norm of $\bw_1$, we want to show a sharper bound, which is crucial in the weighted case, but can be avoided in the unweighted case. 
We want to expand $\bw_1$ further, as follows:
\[
\bw_1=\sum_{i=0}^{d}\be_i\wedge\widetilde \bw_i^{(r-1)}
    =\sum_J\sum_{i=0}^{d}W_{iJ}\,\be_i\wedge \be_J,
\]
where \(J\subset\{d+1,\ldots,n\}\) and \(\#J=r-1\). Thus
\[
g\bw_1=\sum_J\sum_i W_{iJ}\,g(\be_i\wedge \be_J).
\]
By plugging $\bx=\mathbf{0}$ in \eqref{eq: reverse}, we get that 
\begin{equation}\label{eq: bound of W_{0J}}
|W_{0J}|\le b_0^{-1}\prod_{j\in J}b_j^{-1} \rho_{B}.
\end{equation} for all $J\subset \{d+1,\cdots,n\}, \#J=r-1.$

For \(i_{0}\ne0, 1\le i_0\le d\), let us choose \(x_{i_0}=r(B)\) and \(x_i=0\) for
\(i\ne i_0\).
Using the definition of matrices, we have
\begin{align*}
&\sum_J\sum_i g u^\star(\bx)(W_{iJ}\be_i\wedge \be_J)\\
&=\sum_J\sum_{i\ne0}
  g\bigl(W_{iJ}(\be_i-x_i \be_0)\wedge \be_J\bigr)
  +\sum_J g(W_{0J}\be_0\wedge \be_J)\\
&=\sum_J
  g\bigl(W_{i_0J}(\be_{i_0}-r(B)\be_0)\wedge \be_J\bigr)
  +\sum_J\sum_{\substack{i\ne i_0\\ i\ne0}}
  g(W_{iJ}\be_i\wedge \be_J)
  +\sum_J g(W_{0J}\be_0\wedge \be_J)\\
&=\sum_J\sum_{i\ne0}g(W_{iJ}\be_i\wedge \be_J)
  +\sum_J g\bigl((W_{0J}-r(B)W_{i_0J})
                    \be_0\wedge \be_J\bigr).
\end{align*}

By \eqref{eq: reverse} and orthogonality,
\[
\left\|
g\bigl((W_{0J}-r(B)W_{i_0J})\be_0\wedge \be_J\bigr)
\right\|\le \rho_{B}.
\]
Hence by \eqref{eq: bound of W_{i,J}} \begin{equation}\label{eq: bound of W_{i,J}}
     \vert W_{i_0J}\vert\leq 2\rho_{B} r(B)^{-1} b_0^{-1}\prod_{j\in J} b_{j}^{-1}< b_0^{-1}\prod_{j\in J} b_{j}^{-1},
\end{equation}
for every $i_0, J$ as above.

\textbf{Norm of $\bw_0$:} By plugging $\bx=\mathbf{0}$ in \eqref{eq: reverse} and since $\bw_i$'s, $0\leq i\leq d+1$ belong to mutually orthogonal spaces, we get that \[\Vert g\bw_0\Vert\leq \rho_{B}<1\implies \Vert \bw_0\Vert \leq b^{-1}_{d+1}\cdots b^{-1}_{d+r}.\] Plugging values of $b_i$, we get \begin{equation}
    \label{ineq-on-w0}
    \Vert \bw_0\Vert<b^{k\sum_{i=d+1}^{r+d}w_i}\cdot b^{trl\frac{1+w_1}{1+n}}.
\end{equation} 
Moreover, we may write \begin{equation}\label{eq: expression of w_0}\bw_0=\sum_{J', J'\subset \{d+1,\cdots,n\}, \#J'=r} W^{0}_{J'}\be_{J'}.\end{equation} Then by plugging $\bx=\mathbf{0}$, we get that 
\begin{equation}\label{eq: bound on w^0_J}
    \vert W^{0}_{J'}\vert \leq \prod_{j\in J'}b_{j}^{-1} \rho_{B}<\prod_{j\in J'}b_{j}^{-1}.
\end{equation}

We claim that $\Vert \bw_0\Vert \neq 0.$ This follows by \eqref{eq: compare norms} and \eqref{eq: norm of w_i}, 
$$
1\ll\Vert C^\star_{A,\mathbf{b}}\bv\Vert\asymp_{n}\Vert \bw_0\Vert.$$ 

By the action of $C_{A,\mathbf{b}}^\star$ on $V_2$, as $\bv\in \bigwedge^r\Z^{n+1}$, we get that 
$\bw_0\in \bigwedge^r\Z^{n+1}.$

Thus \begin{equation}\label{w0-estimate}
    r\leq m,
\end{equation}where the second inequality is from that $r>m$ immediately implies $\bw_0=0$ by definition of $\bw_0.$

\textbf{Contradiction to Diophantine condition:}
Recall $$b(T):=\diag(b^T, \cdots, b^T, b^{-Tw_{d+1}},\cdots, b^{-Tw_{n}}) $$ and  let us denote $\beta^\star=\frac{(1+w_1)t}{n+1}.$ 
For any $T\leq k,$ 

\begin{equation}\label{btw0-estimate}\Vert b(T)\bw_0\Vert \asymp_{n}\max_{J', \#J'=r, J'\subset\{d+1,\cdots,n\}} \Vert W_{J'}^{0} b(T)  \be_{J'}\Vert \leq b^{-(T-k)\sum_{i=d+1}^{d+r} w_i} b^{\beta^\star rl},\end{equation} using \eqref{eq: bound on w^0_J}, \eqref{eq: increasing w} and by expression of $\bw_0$ as in \eqref{eq: expression of w_0}. 

Similarly, we have for $T\leq k$ \begin{equation}\label{eq: bound on b(T)w_1}\Vert b(T)\bw_1\Vert\leq c_n^{0} b^{(T-k)(1-\sum_{i={d+1}}^{d+r-1}w_i)} b^{\beta^*rl}=:W_1(T)  \end{equation} by \eqref{eq: bound of W_{i,J}} and the expression of $\bw_1$ in terms of basis $\be_i\wedge\be_{J}$, where $c_n^{0}$ is a positive constant dependent on $n$.  



Note that for any $T\ge 0$, we have that $\Vert b(T)\bw_0\Vert\geq b^{-T} $ since $ \Vert\bw_0\Vert\geq 1 $ and $(w_i)$ is a weight. From \eqref{eq: bound on b(T)w_1} and \eqref{btw0-estimate}, we get that 
for $$ T\ll_{n}
\frac{
\left(1-\sum_{i=d+1}^{d+r-1}w_i\right)k
-\beta^\star rl
}{
2-\sum_{i=d+1}^{d+r-1}w_i
}=:k^\star, \quad  \Vert b(T)\bw_1\Vert \leq W_1(T)\leq \Vert b(T)\bw_0\Vert.$$ We take $s$ large enough so that $k^\star>\frac{
\left(1-\sum_{i=d+1}^{d+r-1}w_i\right)k
}{2(
2-\sum_{i=d+1}^{d+r-1}w_i
)} c^{1}_{n}$, where $c^{1}_{n}$ is $n$ dependent constant in the previous display. This in particular implies there exists $\left(0, k^\star \right) \subset (0,k]$ with the above property. This is because the map $T\to \Vert b(T)\bw_0\Vert$ is decreasing and the map $T\to W_1(T)$ is increasing. Also note that $W_1(k)=c_{n}^{0}b^{\beta^\star r l}$, and $\Vert b(k)\bw_1\Vert\leq b^{\beta^\star rl}.$ Thus there exists a $T^\star \leq k$ such that $\Vert b(T^\star)\bw_0\Vert=W_1(T^\star)$.  Note that by the choice of $T^\star$, $T^\star\geq k^\star\implies T^\star\gg k.$

From this choice of $T^\star$, 
$$\Vert b(T^\star)\bw_1\Vert\ll_n \Vert b(T^\star)\bw_0\Vert=W_1(T^\star) \ll_{n} b^{(T^\star-k)(1-\sum_{i={d+1}}^{d+r-1}w_i)} b^{\beta^*rl}<b^{\beta^\star r l}. $$  
Thus we have 
\begin{equation}\label{eq:bound_b(T)w_0_special}
    \Vert b(T^\star)\bw_0\Vert^{1/r} \ll_{n} b^{\beta^\star l}.
\end{equation}


Furthermore, using \cite[Lemma 4.4]{Shah_Yang_2024}, we get that 
$$\Vert b(T^\star)\bw_i\Vert \Vert b(T^\star)\bw_0\Vert\ll_{n} \Vert b(T^\star)\bw_{i-1}\Vert\Vert b(T^\star)\bw_1\Vert, i\geq 2.$$
Thus, by the choice of $T^\star$, and above 
$$\Vert b(T^\star)\bw_i\Vert\ll_{n} \Vert b(T^\star)\bw_{i-1}\Vert, \quad i\geq 1.$$

Therefore, $$\Vert b(T^\star)C^\star_{A,\mathbf{b}}(\bv_1\wedge\cdots\wedge\bv_r)\Vert \ll_{n}\Vert b(T^\star)\bw_0\Vert.$$
Hence using Minkowski's theorem there exists a $\tilde\bv\in \Z^{n+1}$ such that 
\begin{equation}\label{eq: short vector}
\Vert b(T^\star)C^\star_{A,\mathbf{b}}\tilde\bv\Vert\ll_{n} \Vert b(T^\star) 
\bw_0\Vert^{1/r}    
\stackrel{\eqref{eq:bound_b(T)w_0_special}}{\ll_{n}}
b^{\beta^\star l}.\end{equation}
Since $T^\star\gg_{n, (w_i)_{i=1}^n} k$, for $\eta=\frac{1}{2}\eta_{A_{\mathbf b}}>0$, by taking large $s$, we have $$\Vert b(T^\star)C^\star_{A,\mathbf{b}}\tilde\bv\Vert <b^{\eta T^\star},$$ where $\eta_{A_{\mathbf b}}$ appears in Lemma \ref{lem: dani0}. This contradicts the Diophantine property of $A_{\mathbf{b}}$ by Lemma \ref{lem: dani0}.

\textbf{General ball:} If $B$ is centered at $\bx_0\in B$, then we use the following reformulation of $d_{l}a_{k}u_1^\star(\bx)$ to be

\[
u_1^\star(\bx)=u^\star(\bx) C^\star_{A,\mathbf{b}}=u^\star(\bx-\bx_0) u^\star(\bx_0) C^\star_{A,\mathbf{b}}.
\]

Thus \eqref{eq: reverse} will be replaced by 
\begin{equation}\label{eq: reverse translated}
\sup_{\by\in B(\mathbf{0}, r(B))}\Vert u^\star(\Theta \by) g u^\star(\bx_0) C^\star_{A,\mathbf{b}}(\bv_1\wedge\cdots\wedge\bv_r) \Vert \leq \rho_{B}.
\end{equation}
Using the same decomposition as above, we now write
\[
u^\star(\bx_0)C^\star_{A,\mathbf{b}}
(\bv_1\wedge\cdots\wedge\bv_r)
=\sum_{i=0}^{d+1}\bw_i,
\]
where
\(
\bw_i\in \bigwedge^iV_1\otimes\bigwedge^{r-i}V_2
\).
Since $u^\star(\bx_0)$ preserves $V_1$ and acts trivially on $V_2$, $\bw_0\in\bigwedge^r\Z^{n+1}$. Moreover,
\[
1\leq\Vert\bv\Vert
\ll_{A,\mathbf b,\bU}
\Vert u^\star(\bx_0)C^\star_{A,\mathbf b}\bv\Vert.
\]
Therefore we repeat the preceding estimates, using \eqref{eq: reverse translated} and choosing $\by=\mathbf 0$ or $\by=r(B)\be_j$ in place of the corresponding choices of $\bx$. Thus, for every $\eta>0$, there exist $T\to\infty$ and a nonzero $\bz\in\Z^{n+1}$ such that
\[
\Vert b(T)u^\star(\bx_0)C^\star_{A,\mathbf b}\bz\Vert
\leq e^{\eta T}.
\]
Finally, since $b(T)$ commutes with $u^\star(\bx_0)$, we get
\[
\Vert b(T)C^\star_{A,\mathbf b}\bz\Vert
\ll_{\bU} e^{\eta T},
\]
which contradicts Lemma \ref{lem: dani0} after taking $\eta>0$ sufficiently small. This completes the proof for an arbitrary ball $B$.

\end{proof}

\subsection{Almost optimal condition} \label{sec:optimal} 
Suppose $A_{\mathbf{b}}$ satisfies \eqref{eq: tight condition on A_b} for infinitely many 
nonzero $\bq_2\in\Z^m$, with a fixed $\epsilon>0$.
If $(\bx,A\bx+\mathbf b)\in\mathbf{Bad}(\bw)$, weighted transference gives a constant $c(\bx)>0$ such that

\[
|\bq_1\cdot\bx+\bq_2\cdot(A\bx+\mathbf b)+q_0|\geq\frac{c(\bx)}{\Vert(\bq_1,\bq_2)\Vert_{\bw}}
\]

for every $(\bq_1,\bq_2)\neq0$ and $q_0\in\Z$. Choose $q_0,\bq_1$ minimizing $\Vert A_{\mathbf b}\bq_2+(q_0,\bq_1)^\top\Vert$. Along the approximating sequence this norm is at most $1$, so $\Vert\bq_1\Vert\ll_A\Vert\bq_2\Vert$. By \eqref{eq: increasing w},

\[
\Vert\bq_1\Vert_{\pi_1(\bw)}\ll_A\Vert\bq_2\Vert^{1/w_d}\ll_{A,n}\Vert\bq_2\Vert_{\pi_2(\bw)},
\]

where $\pi_1(\bw)=(w_1,\ldots,w_d)$ and $\pi_2(\bw)=(w_{d+1},\ldots,w_n)$. Therefore

\[
\Vert A_{\mathbf b}\bq_2\Vert_{\Z}\gg_{\bx,A}\frac{1}{\Vert\bq_2\Vert_{\pi_2(\bw)}}.
\]

This contradicts \eqref{eq: tight condition on A_b}  as $\Vert\bq_2\Vert\to\infty$. Hence $\bad(\bw)\cap\mathcal L=\emptyset$ . In particular, if $\omega(A_{\mathbf b})>1/w_n$, choose $1/w_n<\kappa<\omega(A_{\mathbf b})$ and then $\epsilon>0$ with $(1+\epsilon)/w_n<\kappa$. Infinitely many nonzero $\bq_2$ satisfy $\Vert A_{\mathbf b}\bq_2\Vert_{\Z}\leq\Vert\bq_2\Vert^{-\kappa}$, which implies \eqref{eq: tight condition on A_b}, after decreasing $\epsilon$ if necessary.

\section{Global estimate for nondegenerate manifolds}\label{sec: global nondeg}
We prove the global estimate for  the nondegenerate case in this section. The proof of the following is motivated by a more complicated situation that arises in \cite{BD_upcoming}.

\begin{proposition}\label{nondeg-global}
   Let $\phi(\bx)=(\bx,\varphi(\bx)):\bU\subset\R^d\to\R^n$ be analytic and nondegenerate at $\bx_0\in\bU$, where $\bU$ is open,
   then for every sufficiently small closed ball $B_0$ centered at $\bx_0$ and contained in $\bU$, there exists $\rho=\rho(\phi,B_0)>0$, such that for any $g=\diag(\lambda_0,\ldots,\lambda_n)\in\SL_{n+1}(\R)$ with 
    \begin{equation}\label{eq: cond on lambda}
       \lambda_0\geq 1, \quad 0<\lambda_i\leq 1, \quad 1\leq i\leq n,
   \end{equation}   and for any primitive collection  $\bv_1,\ldots,\bv_r\in\Z^{n+1}$ with $1\leq r\leq n+1$,
    \begin{equation}
       \sup_{\bx\in B_0}\Vert gz_{D\varphi(\bx)}u_{(\bx\text{ }\varphi(\bx))}(\bv_1\wedge\cdots\wedge\bv_r)\Vert \geq \rho.
   \end{equation}

\end{proposition}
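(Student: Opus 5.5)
I would argue by contradiction combined with a compactness/analyticity argument, reducing the statement to a statement about the matrix-valued function $h(\bx):=z_{D\varphi(\bx)}u_{(\bx\ \varphi(\bx))}$ alone. First I write out $h(\bx)$ explicitly. Using \eqref{conjugate-zu}, the first row of $h(\bx)$ is essentially $(1,\bx,\varphi(\bx))$ modified so that, after the $z_{D\varphi}$ correction, the entries in positions $1,\dots,d$ become $\bx - D\varphi(\bx)^T\varphi(\bx)$-type expressions. The important point is that the $V_1$-coordinates see $\tilde\phi(\bx)$ and its first derivatives, so $h(\bx)$ is the standard linearization matrix used in \cite{BNY22}. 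For a decomposable $\bv=\bv_1\wedge\cdots\wedge\bv_r$, I expand $h(\bx)\bv$ in the basis $\be_I$, $I\subset\{0,\dots,n\}$, $\#I=r$. Since $g$ is diagonal, $\Vert g h(\bx)\bv\Vert\asymp\max_I \lambda_I |\langle h(\bx)\bv,\be_I\rangle|$ with $\lambda_I=\prod_{i\in I}\lambda_i$. By \eqref{eq: cond on lambda}, $\lambda_I\geq\lambda_{I'}$ whenever $I=I'\cup\{0\}\setminus\{i\}$; moreover $\lambda_0\lambda_1\cdots\lambda_n=1$ forces $\lambda_I\geq 1$ for $0\in I$ only in specific situations. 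So the first key step is to identify, for each $r$, a family of coordinate functionals $\bx\mapsto\langle h(\bx)\bv,\be_I\rangle$ with $\lambda_I\geq 1$ for every admissible $g$. The natural candidate is $I\ni 0$ together with the complementary structure: the coordinate $\be_0\wedge\be_{I'}$ paired against $\be_{\{0,\dots,n\}}$ after passing to the dual. Concretely, I would use $\Vert g\bv\Vert=\Vert g^{*}(\star\bv)\Vert$ with the Hodge dual $\star$ on $\bigwedge^{n+1-r}$, where the dual action has entries $\lambda_i^{-1}\geq1$ for $i\geq1$. This shows it suffices to bound from below a suitable sub-collection of coordinates whose coefficients are $\geq1$ uniformly in $g$.

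The second step is to show that the relevant coordinate functions $f_{\bv,I}(\bx)$ cannot all be simultaneously small on $B_0$. Write them as $\langle \bc, F(\bx)\rangle$ with integer coefficient vector $\bc$ built from minors of $(\bv_1,\dots,\bv_r)$ and $F$ a fixed analytic vector function built from $1,\bx,\varphi(\bx)$ and the minors of $D\varphi$, exactly as in \cite[\S 5]{BNY22}. Nondegeneracy at $\bx_0$ (together with analyticity) gives that $1,\phi_1,\dots,\phi_n$ are linearly independent on every neighbourhood of $\bx_0$. From this I would deduce that for each $r$ the map $\bc\mapsto\sup_{B_0}|\langle\bc,F\rangle|$ is a norm on the relevant coefficient space, provided $B_0$ is small enough that analyticity propagates independence. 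Then by compactness of the unit sphere $\sup_{B_0}\max_I|f_{\bv,I}|\geq c\,\Vert\bc\Vert\geq c$ since $\bc$ is a nonzero integer vector ($\bv\neq0$, primitive). Taking $\rho=c$ (times a dimensional constant) gives the claim; the case $r=n+1$ is trivial as $g h(\bx)\in\SL_{n+1}(\R)$.

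The main obstacle is the first step. I need to check that the coordinates for which $\lambda_I\geq1$ holds uniformly in $g$ already capture enough linear functionals so that the nondegeneracy yields a genuine norm, rather than a seminorm vanishing on some integer $\bc$. For $r=1$ this is the coordinate $\be_0$, with the function $q_0+\bq\cdot\phi(\bx)$, whose sup is bounded below by independence. For general $r$ I expect to need the explicit structure of $z_{D\varphi}$ to show that the $\be_0\wedge(\cdot)$ part of $h(\bx)\bv$ equals $\tilde\phi(\bx)$ wedged against a derivative-corrected copy of the $V_2$ component. I would then reduce to the $r=1$ case applied to one of the $\bv_j$ after a change of basis. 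If that fails, the fallback is to treat the maximal index set only ($I=\{0,n-r+2,\dots,n\}$ or its dual) and use that the smallest $\lambda$'s product paired with $\lambda_0$ is controlled by $\det g=1$.
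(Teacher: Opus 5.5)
Your framework matches the paper's: use only coordinates $\be_I$ with $0\in I$, and get the lower bound from $\sup_{B_0}|\tilde\phi(\bx)\cdot\ba|\geq\rho_1\Vert\ba\Vert$ together with integrality. But the step that carries all the content is asserted rather than proved.

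The rows of $h(\bx)=z_{D\varphi(\bx)}u_{(\bx\ \varphi(\bx))}$ are exactly $\tilde\phi,\partial_1\tilde\phi,\ldots,\partial_d\tilde\phi,\be_{d+1},\ldots,\be_n$; in particular the first row is just $(1,\bx,\varphi(\bx))$. So by Cauchy--Binet, with $\bu$ the Hodge dual of $\bv$, the coordinate indexed by $I=\{0,i_1,\ldots,i_a,j_1,\ldots,j_b\}$ is $\pm\,\tilde\phi\wedge\partial_{i_1}\tilde\phi\wedge\cdots\wedge\partial_{i_a}\tilde\phi\wedge\be_{j_1}\wedge\cdots\wedge\be_{j_b}\wedge\bu$. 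These functions involve products of $\varphi$ with its derivatives. Linear independence of $1,\phi_1,\ldots,\phi_n$ therefore does not by itself stop all of them from vanishing identically, and your claim that $\bc\mapsto\sup_{B_0}|\langle\bc,F\rangle|$ is a norm is exactly what needs proof.

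Neither of your suggested routes supplies it. Reducing to $r=1$ for a single $\bv_j$ ignores that these coordinates are minors mixing $\tilde\phi\cdot\bv_k$ with $\partial_i\tilde\phi\cdot\bv_k$. The fallback of one fixed index set fails outright: for $r=2$ and $\bv=\be_0\wedge\be_1$ one has $h(\bx)\bv=\be_0\wedge\be_1$, whose $\{0,n\}$-coordinate is identically $0$. The index set must depend on $\bv$. This is allowed because, contrary to your remark, every $I\ni0$ satisfies $\lambda_I=\prod_{i\in\{1,\ldots,n\}\setminus I}\lambda_i^{-1}\geq1$.

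The missing idea is the paper's choice of $I$ that makes the derivative terms disappear. Let $s\geq0$ be minimal such that $\bw=\be_{j_1}\wedge\cdots\wedge\be_{j_{r-1-s}}\wedge\bu\neq0$ for some distinct $j_k\in\{d+1,\ldots,n\}$.

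If $s=0$, the coordinate $I=\{0,j_1,\ldots,j_{r-1}\}$ equals $\pm\lambda_I\,\tilde\phi\cdot\ba$, where $\ba$ is the Hodge dual of the integer $n$-vector $\bw$, so $\ba\in\Z^{n+1}\setminus\{0\}$.

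If $s>0$, minimality forces $\be_j\wedge\bw=0$ for all $j\geq d+1$. Hence the subspace represented by $\bw$ contains $\be_{d+1},\ldots,\be_n$, and therefore contains each $\partial_i\tilde\phi-\be_i$. Choose $i_1,\ldots,i_s\in\{1,\ldots,d\}$ with $\be_{i_1}\wedge\cdots\wedge\be_{i_s}\wedge\bw\neq0$. Then the coordinate $I=\{0,i_1,\ldots,i_s,j_1,\ldots,j_{r-1-s}\}$ equals $\pm\lambda_I\,\tilde\phi\cdot\ba$, with $\ba$ the nonzero integer Hodge dual of $\be_{i_1}\wedge\cdots\wedge\be_{i_s}\wedge\bw$.

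Only at this point do nondegeneracy, compactness, $\Vert\ba\Vert\geq1$ and $\lambda_I\geq1$ combine to give the bound $\rho_1$. The same argument works for real $\bv$, so your seminorm is in fact a norm, but that is precisely what your proposal leaves unproved.
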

  \begin{proof}
    Let $H(\bx)=gz_{D\varphi(\bx)}u_{(\bx,\varphi(\bx))}$ and $\tilde{\phi}(\bx)=(1,\bx,\varphi(\bx))$. The case $r=n+1$ follows from $\det H(\bx)=1$. For $1\leq r\leq n$, let $\bv=\bv_1\wedge\cdots\wedge\bv_r$ be primitive. By \mbox{
\cite[Lemmas 5.20--5.21]{BNY22}}\hskip0pt
, there  is a primitive collection  $\bu_1,\ldots,\bu_{n+1-r}\in\Z^{n+1}$ whose wedge $\bu=\bigwedge_{i=1}^{n+1-r} \bu_i$ satisfies \cite[Equation 5.7, Equation 5.8]{BNY22},  and is the Hodge dual of $\bv$ upto sign.

   Nondegeneracy implies that the coordinate functions of $\tilde{\phi}$ are linearly independent on $B_0$. By compactness of the unit sphere, there is $\rho_1>0$, independent of $\ba$, such that
    \begin{equation}\label{uniform-linear-bound}
        \sup_{\bx\in B_0}|\tilde{\phi}(\bx)\cdot\ba|\geq\rho_1\Vert\ba\Vert,\qquad \ba\in\R^{n+1}.
    \end{equation}

    \noindent Let $s$ be the least nonnegative integer for which there are distinct indices $j_1,\ldots,j_{r-1-s}\in\{d+1,\ldots,n\}$ such that
    \begin{equation}\label{eq: nonzero wedge}
        \bw:=\be_{j_1}\wedge\cdots\wedge\be_{j_{r-1-s}}\wedge\bu\neq0.
    \end{equation} 

 Such an $s$  exists, since $s=r-1$ is allowed. Suppose $s=0$  , which  implies $r-1\leq m$.  Then $\bw$ is a wedge of $n$ vectors with integer coordinates. Its Hodge dual is a nonzero integer vector $\ba$, so

  \[
|\tilde{\phi}(\bx)\cdot\ba|=\Vert\tilde{\phi}(\bx)\wedge\bw\Vert.
\] 

    For $I=\{0,j_1,\ldots,j_{r-1}\}$, the row formula gives
     
 \[
|(H(\bx)\bv)_I|=\lambda_0\prod_{k=1}^{r-1}\lambda_{j_k}\Vert \tilde\phi(\bx)\wedge\bw\Vert=\lambda_0\prod_{k=1}^{r-1}\lambda_{j_k}|\tilde{\phi}(\bx)\cdot\ba|.
\] 

    Since  $g\in \SL_{n+1}(\R)$, by \eqref{eq: cond on lambda} we have that $\lambda_0 \prod_{k=1}^{r-1}\lambda_{j_k}\geq 1$.
    Thus \eqref{uniform-linear-bound} and $\Vert\ba\Vert\geq1$ give

\[
\sup_{\bx\in B_0}\Vert H(\bx)\bv\Vert\geq\rho_1.
\]  Now 
 suppose  $s>0$. Minimality of $s$ implies that $\be_j\wedge\bw=0$ for every $d+1\leq j\leq n$. Thus the subspace represented by $\bw$, which has dimension $n-s$, contains the entire span of $\be_{d+1},\ldots,\be_n$. In particular $s\leq d$. Let us choose distinct $i_1,\ldots,i_s\in\{1,\ldots,d\}$  such that
 \[
\be_{i_1}\wedge\cdots\wedge\be_{i_s}\wedge\bw\neq0.
\] 

   Note that by the parametrization $\phi(\bx)=(\bx, \varphi(\bx)),$
 Since $\partial_i\tilde{\phi}(\bx)-\be_i$ belongs to the subspace generated by $\bw$, hence
    \[
    \tilde{\phi}(\bx)\wedge\partial_{i_1}\tilde{\phi}(\bx)\wedge\cdots\wedge\partial_{i_s}\tilde{\phi}(\bx)\wedge\bw
    =\tilde{\phi}(\bx)\wedge\be_{i_1}\wedge\cdots\wedge\be_{i_s}\wedge\bw.
    \]
    Let $\ba\in\Z^{n+1}\setminus\{0\}$ be the Hodge dual of $\be_{i_1}\wedge\cdots\wedge\be_{i_s}\wedge\bw$. With

  \[
I=\{0,i_1,\ldots,i_s,j_1,\ldots,j_{r-1-s}\},
\]

    the same row formula now yields

  \[
|(H(\bx)\bv)_I|=\lambda_0\prod_{k=1}^s\lambda_{i_k}\prod_{k=1}^{r-1-s}\lambda_{j_k}|\tilde{\phi}(\bx)\cdot\ba|.
\] 
Applying the linear independence criteria uniformly over the ball $B_0$ as in \eqref{uniform-linear-bound} and by \eqref{eq: cond on lambda} we obtain:
    \[
        \\sup_{\bx\in B_0} \|H(\bx)(\bv_1 \wedge \dots \wedge \bv_r)\| \geq  \rho_1 > 0.
    \]

     Taking $\rho=\min\{1,\rho_1\}$ completes the proof.
 \end{proof}

\section{Reduction to Cantor winning}

The  proofs  of Theorem \ref{main thm: linear} and Theorem \ref{main thm: nondeg}  follow from \mbox{
Lemma \ref{equivalence1} }\hskip0pt
once we show that $\phi^{-1}(\mathbf{Bad}(\bw))\cap\supp\mu\neq\emptyset$ for each nonzero Ahlfors regular absolutely decaying measure under consideration.

From now on, we fix a nonzero Ahlfors regular absolutely decaying measure $\mu$ with $X=\supp\mu\subset \bU$.

The nonemptiness of $\phi^{-1}(\textbf{Bad}(\bw))\cap\supp\mu$ would follow from Lemma \ref{non-empty} if we can show it is Cantor winning. Therefore, the rest of this paper will be devoted to providing a winning strategy for Cantor potential game over $\phi^{-1}(\textbf{Bad}(\bw))\cap\supp\mu$.

  We first state a few simplifying assumptions for the game strategy. When the potential game starts, the player Bob would choose a ball $B_0$ in $X$ of radius $r_0$ with center at $\bx_0\in X$.  By covering arguments in \cite{BNY22}, we may assume $$B(\bx_0,5r_0)\subset B(\mathbf z),$$ where $B(\mathbf z)$ is a ball that satisfies Theorem \ref{analytic-nde}. Further, $r_0$ is small enough so that $$r_0\leq \min\left\{\frac{1}{3}\rho_0,\frac{1}{\sqrt n},A^{-1/\alpha}\right\},$$ where $\rho_0,A,\alpha$ are as in Definition \ref{ahlfors}; in particular, for any $\bx\in\supp\mu,r\leq r_0,$ one has $\mu(B(\bx,r))\leq 1.$ On the other hand, we may assume that $\beta>0$ is sufficiently small(which will be determined later) via postponing Alice's strategy. The details of these simplifying assumptions can be found in \cite[after Proposition 26]{BNY22}.

\section{Measure estimates using global and local}

In this section, we aim to provide measure estimates needed for the potential game strategy. Measure estimates in this section hold for both nondegenerate manifolds and affine subspaces.

We consider the manifold $\phi(\bx)=(\bx\text{ }\varphi(\bx)),$ where $\varphi(\bx)$ is as in Theorem \ref{main thm: nondeg} or respectively  $\varphi(\bx)=A\bx+\mathbf b$ as in Theorem \ref{main thm: linear}.

As an analogue  of \cite[Lemma 3.1]{BNY22}, we have the following lemma .
 \begin{lemma}\label{transition}
     For every sufficiently small $\beta>0$, any $\tilde{\bx}_N,\bx\in B(\bx_0,5r_0)$, and nonnegative integers $l,k,N$  such that  $k+1\leq 2N,l+1\leq N$ and $\Vert \bx'\Vert=\lVert \bx-\tilde{\bx}_N\rVert\leq r=r_0\beta^N$, and for any  nonzero $\bv=\bv_1\wedge\cdots\wedge\bv_i\in\bigwedge^i\R^{n+1}$, $1\leq i\leq n+1$,  we have \begin{equation}
        \frac{1}{2}\leq\frac{\Vert u_{(\beta^{l-k}x_1',...,\beta^{l-k}x_t',b^{(1+w_{t+1})k}x_{t+1}',...,b^{(1+w_d)k}x_d',0,...,0)}H(\tilde{\bx}_N)\bv\Vert}{\Vert H(\bx)\bv\Vert}\leq 2
    \end{equation}
    where  \begin{equation}
        H(\bx):=d_la_kz_{D\varphi(\bx)}u_{(\bx\text{ }\varphi(\bx))}.
    \end{equation} 
 The same comparison holds if instead $l\geq1$, $k\geq3l$ and $\Vert\bx-\tilde{\bx}_N\Vert\leq r=r_0\beta^{k-l}$.
 \end{lemma}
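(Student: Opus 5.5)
The plan is to rewrite $H(\bx)$ in terms of $H(\tilde{\bx}_N)$ times a unipotent correction, and then to show that this correction is a bounded perturbation of the explicit $u$-factor in the statement. Concretely, I will write $\bx=\tilde{\bx}_N+\bx'$ and expand
\[
u_{(\bx\ \varphi(\bx))}=u_{(\bx'\ \varphi(\bx)-\varphi(\tilde{\bx}_N))}\,u_{(\tilde{\bx}_N\ \varphi(\tilde{\bx}_N))}.
\]
Taylor expansion gives
\[
\varphi(\bx)-\varphi(\tilde{\bx}_N)=D\varphi(\tilde{\bx}_N)\bx'+O(\Vert\bx'\Vert^2).
\]
In the affine case the error term vanishes, and the second-order remainder disappears. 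Next I conjugate by $z_{D\varphi(\tilde{\bx}_N)}$ using \eqref{conjugate-zu}. This turns $u_{(\bx'\ D\varphi(\tilde{\bx}_N)\bx')}$ into $u_{(\bx'\ \mathbf0)}$, so that
\[
z_{D\varphi(\bx)}u_{(\bx\ \varphi(\bx))}=z_{D\varphi(\bx)-D\varphi(\tilde{\bx}_N)}\,u_{(\bx'\ \mathbf0)}\,u_{(\mathbf0\ O(\Vert\bx'\Vert^2))}\,z_{D\varphi(\tilde{\bx}_N)}u_{(\tilde{\bx}_N\ \varphi(\tilde{\bx}_N))},
\]
up to reordering the small unipotent factors; these lie in a nilpotent group, so commutators are again small unipotents.

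The next step is to push $d_la_k$ through each factor using \eqref{conjugate-au}, \eqref{conjugate-az}, \eqref{conjugate-du} and \eqref{conjugate-dz}. The factor $u_{(\bx'\ \mathbf0)}$ becomes exactly the displayed $u$. Its first $t$ coordinates are scaled by $b^{(1+w_1)k}\beta^{l}=\beta^{l-k}$, and the remaining coordinates by $b^{(1+w_j)k}$. This gives
\[
H(\bx)=E\cdot u_{(\ldots)}\cdot E'\cdot H(\tilde{\bx}_N),
\]
where $E$ and $E'$ are the conjugates of $z_{D\varphi(\bx)-D\varphi(\tilde{\bx}_N)}$ and of the quadratic-error unipotent. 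Once I show $\Vert E-\mathrm I\Vert$ and $\Vert E'-\mathrm I\Vert$ are at most $c\beta^{1/2}$, say, the action on $\bigwedge^i\R^{n+1}$ will be within a factor $2$ of the identity for small $\beta$. One further observation finishes the reduction: $E'$ is a $u$-type element and so commutes with $u_{(\ldots)}$. Hence the ratio of norms lies in $[1/2,2]$, with the bound applied once in each direction.

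The main obstacle is the quantitative check on $E$ and $E'$.
\begin{itemize}
\item For $E'$, the quadratic term in coordinate $j>d$ gets multiplied by at most $b^{(1+w_j)k}$, or by $\beta^{l}b^{(1+w_j)k}$ for $j\le t$. It is of size $\Vert\bx'\Vert^2\le r_0^2\beta^{2N}$. Since $b^{(1+w_1)k}=\beta^{-k}$ and $k+1\le 2N$, the product is at most $r_0^2\beta^{2N-k}\le r_0^2\beta$.
\item For $E$, the entries of $D\varphi(\bx)-D\varphi(\tilde{\bx}_N)$ are $O(\beta^N)$ by analyticity, and they vanish in the affine case. By \eqref{conjugate-az} and \eqref{conjugate-dz} they are multiplied by $b^{(w_{i+d}-w_j)k}$ times a possible factor $\beta^{-l}$. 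Because $w_{i+d}\le w_j$ under \eqref{eq: increasing w}, the $a_k$-factor is at most $1$. The $\beta^{-l}$ factor is absorbed using $l+1\le N$, leaving at most $\beta$.
\end{itemize}
For the second regime ($k\ge3l$, $\Vert\bx'\Vert\le r_0\beta^{k-l}$), I will rerun the same bookkeeping. The quadratic term gives $\beta^{2(k-l)-k}=\beta^{k-2l}\le\beta^{l}$, and the derivative term gives $\beta^{k-l-l}\le\beta$. Care is needed only where coordinates $t<j\le d$ pick up mixed exponents. There I will use the choice of $s$, which gives $b_1\le b_{t+1}$, as in the proof of Proposition \ref{affine-global}.
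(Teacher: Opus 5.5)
Your proposal is correct and follows essentially the same route as the paper: Taylor-expand, move $u_{(\bx'\ \mathbf 0)}$ past $z_{D\varphi(\tilde\bx_N)}$ via \eqref{conjugate-zu}, and conjugate by $d_la_k$ to get $H(\bx)=z_{O(r\beta^{-l})}u_{O(r^2\beta^{-k})}u_{(\ldots)}H(\tilde\bx_N)$, with both error factors close to the identity in each regime. Your closing appeal to the choice of $s$ for coordinates $t<j\le d$ is unnecessary: those coordinates enter only the explicit displayed $u$-factor, which needs no estimate, while the error terms involve only the factors $b^{(w_{i+d}-w_j)k}\le 1$, $\beta^{-l}$, and $b^{(1+w_j)k}\le\beta^{-k}$ for $j>d$.
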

\begin{proof}
 Note that by Taylor series, we have the following estimate
    \begin{equation}\label{taylor-u}\begin{split}
        u_{(\bx\text{ }\varphi(\bx))}=&u_{(\bx\text{ }\varphi(\tilde\bx_N)+\bx'\cdot D\varphi(\tilde\bx_N)^T+O(r^2))}\\=&u_{(\bx'\text{ }\bx'\cdot D\varphi(\tilde\bx_N)^T+O(r^2))}u_{(\tilde\bx_N\text{ }\varphi(\tilde\bx_N))}.
    \end{split}
    \end{equation}
    Similarly
    \begin{equation}\label{taylor-z}
        \begin{split}
            z_{D\varphi(\bx)}&=z_{D\varphi(\tilde\bx_N)+O(r)}\\&=z_{D\varphi(\tilde\bx_N)}z_{O(r)}.
        \end{split}
    \end{equation}
    Then
    \begin{equation}\label{expand-1}
        \begin{split}
            d_la_kz_{D\varphi(\bx)}\stackrel{\eqref{taylor-z}}{=}&d_la_kz_{O(r)}z_{D\varphi(\tilde\bx_N)}\\\stackrel{\eqref{conjugate-az}}{=}&d_lz_{O(r)}a_kz_{D\varphi(\tilde\bx_N)}\\\stackrel{\eqref{conjugate-dz}}{=}&z_{O(r\beta^{-l})}d_la_kz_{D\varphi(\tilde\bx_N)}.
        \end{split}
    \end{equation}
    Further
    \begin{equation}
        \label{expand-2}
        \begin{split}
           & d_la_kz_{D\varphi(\tilde\bx_N)}u_{\bx\text{ }\varphi(\bx)}\\ \stackrel{\eqref{taylor-u}}{=}& d_la_kz_{D\varphi(\tilde\bx_N)}u_{(\bx'\text{ }\bx'\cdot D\varphi(\tilde\bx_N)^T+O(r^2))}u_{(\tilde\bx_N\text{ }\varphi(\tilde\bx_N))}\\\stackrel{\eqref{conjugate-zu}}{=}&d_la_ku_{O(r^2)}u_{(\bx'\text{ }\mathbf{0})}z_{D\varphi(\tilde\bx_N)}u_{(\tilde\bx_N\text{ }\varphi(\tilde\bx_N))}
            \\\stackrel{\eqref{conjugate-au}}{=}&d_lu_{O(r^2\beta^{-k})}u_{(b^{(1+w_1)k}x_1',...,b^{(1+w_d)k}x_N',0,...,0)}a_kz_{D\varphi(\tilde\bx_N)}u_{(\tilde\bx_N\text{ }\varphi(\tilde\bx_N))}\\\stackrel{\eqref{conjugate-du}}{=}&u_{O(r^2\beta^{-k})}u_{(\beta^{l-k}x_1',...,\beta^{l-k}x_t',b^{(1+w_{t+1})k}x_{t+1}',...,b^{(1+w_d)k}x_d',0,...,0)}d_la_kz_{D\varphi(\tilde\bx_N)}u_{(\tilde\bx_N\text{ }\varphi(\tilde\bx_N))}.
        \end{split}
    \end{equation}
    On combining \eqref{expand-1}, we conclude that
    \begin{equation}
        \begin{split}
            &d_la_kz_{D\varphi(\bx)}u_{(\bx\text{ }\varphi(\bx))}\stackrel{\eqref{expand-1}}{=}z_{O(r\beta^{-l})}d_la_kz_{D\varphi(\tilde\bx_N)}u_{\bx\text{ }\varphi(\bx)}\\\stackrel{\eqref{expand-2}}{=}&z_{O(r\beta^{-l})}u_{O(r^2\beta^{-k})}u_{(\beta^{l-k}x_1',...,\beta^{l-k}x_t',b^{(1+w_{t+1})k}x_{t+1}',...,b^{(1+w_d)k}x_d',0,...,0)}d_la_kz_{D\varphi(\tilde\bx_N)}u_{(\tilde\bx_N\text{ }\varphi(\tilde\bx_N))}.
        \end{split}
    \end{equation}
     Now $z_{O(r\beta^{-l})}u_{O(r^2\beta^{-k})}$ is arbitrarily close to the identity matrix for both cases where either $N>1,r=r_0\beta^{N},k<2N,l<N$ or $r=r_0\beta^{k-l},k\geq 3l$ when $\beta$ is small enough, which finishes the proof.
 \end{proof}

Denote  \begin{equation}
    H_0(\bx):=z_{D\varphi(\bx)}u_{(\bx\text{ }\varphi(\bx))},\quad\bx\in U,
\end{equation} 
\begin{equation}
    A_{\epsilon}^{k,l}(B):=\{\bx\in B:d_la_kH_0(\bx)\Z^{n+1}\notin K_{\epsilon}\},
\end{equation}
and in particular, we write $A_{\epsilon}^{k,l,N}$ for $A_{\epsilon}^{k,l}(B_N)$.  Let $\be_+=\be_0$. For any exterior vector $\bv$, write $\bv=\bv_++\bv_-$ for its orthogonal decomposition into terms containing $\be_0$ and terms not containing $\be_0$.

The following is the analogy of \cite[Lemma 27]{BNY21}, which is the key lemma for the inductive construction of game strategy.

\begin{lemma}\label{main-lemma}
    Let $A,\alpha$ be as in Definition \ref{ahlfors}, $\rho_*=\min\{\rho,1\}$, where $\rho$ is as in Proposition \ref{nondeg-global}  for  $B(\bx_0,2r_0)$  in the nondegenerate case, and $\rho=\rho_{B(\bx_0,2r_0)}$ in the affine case, and  $C,\gamma$ be as in Theorem \ref{analytic-nde}, and let
    \begin{equation*}
        C'=CA^2\max\{2^\alpha(\rho_*)^{-\gamma},3^\alpha2^{\gamma}\}.
    \end{equation*} Also, $s$ is as defined in \eqref{constant-s}.

    Then for any quintuple of nonnegative integers $(h,k,l,m,N)$, if
    \begin{equation}\label{removed-1}
        d_{l+m}a_kH_0(\tilde{\bx}_N)\Z^{n+1}\in K_{\sqrt{n+1}\beta^{\frac{m}{n+1}}r_0^{-1}},
    \end{equation}
    and\begin{equation}\label{removed-2}
        d_{l}a_{k-h}H_0(\tilde{\bx}_N)\Z^{n+1}\in K_{\sqrt{2}n\beta^{\frac{\tau}{n}}}
    \end{equation}
    for some $\tilde{\bx}_N\in B_N$, where\begin{equation}
        \tau:=\min\{k-l-m-N-\frac{h}{1+w_1},\frac{hw_n}{1+w_1}\}\geq0,k<2N,l<N,\label{quin-restriction}
    \end{equation}
then for any $\epsilon >0,$\begin{equation}\label{pure-measure-bound}
    \mu(A_{\epsilon}^{k,l,N})\leq C'\epsilon^\gamma\mu(B_N).
\end{equation}
If $N=0$ , $l\geq1$  and $k\geq sl$, then \eqref{pure-measure-bound} holds without assuming \eqref{removed-1}, \eqref{removed-2} and \eqref{quin-restriction}.

Moreover, if $0<r\leq\beta^Nr_0$ and \begin{equation}
    \epsilon':=2(1+\max\{\beta^{l-k},b^{(1+\tilde{w}_{t+1})k}\}r)\epsilon,
\end{equation} 
where $\tilde{w}_{t+1}=w_{t+1}$ if $0< t<d$ and $\tilde{w}_{t+1}=0$ if $t\geq d$,then\begin{equation}
    \mu(B(A_{\epsilon}^{k,l,N},r))\leq C'\epsilon'^\gamma\mu(B_N),\label{nbhd-measure-bound}
\end{equation}
which is valid for   $N\geq1$ under  \eqref{removed-1}, \eqref{removed-2} and \eqref{quin-restriction} . For  $N=0$ , it holds when $l\geq1$,  $k\geq sl$  and, additionally, $r\leq\beta^{k-l}r_0$ .

 \end{lemma}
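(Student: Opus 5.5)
The plan is to apply the quantitative nondivergence estimate, Theorem \ref{analytic-nde}, to $h_0=H_0$ on the ball $B_N$ with $g=d_la_k$ and $\rho=\rho_*$ (suitably rescaled). This gives $\mu(A^{k,l,N}_\epsilon)\leq C(\epsilon/\rho_*)^\gamma\mu(B_N)$ as soon as alternative (2) of that theorem is excluded. So the whole proof is to show that for every primitive $\bv=\bv_1\wedge\cdots\wedge\bv_r$ one has $\sup_{\bx\in B_N}\Vert d_la_kH_0(\bx)\bv\Vert\geq\rho_*$. The constant $C'$ already carries the factors $A^2$ and $2^\alpha$, $3^\alpha$, which come from comparing $\mu(B_N)$ with $\mu$ of concentric balls of radius $2r$ or $3r$ by Ahlfors regularity. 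This suggests the argument runs on an enlarged ball or a rescaled copy of $B_N$.

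For the case $N=0$ with $l\geq1$ and $k\geq sl$ there are two sources for the lower bound. In the affine case it is exactly Proposition \ref{affine-global} applied to $B(\bx_0,2r_0)$. Note that $z_{D\varphi}=z_{A}$ is constant, so $H_0(\bx)$ is $u_1^\star$ up to a fixed transformation and a transpose convention, and the $\rho_{B}$ there gives $\rho_*$. In the nondegenerate case it is Proposition \ref{nondeg-global}; here $g=d_la_k$ satisfies \eqref{eq: cond on lambda} because $k\geq sl$ with $s$ large, so the $\be_0$ entry is $\geq1$ and the others are $\leq1$. Theorem \ref{analytic-nde} then applies directly and gives \eqref{pure-measure-bound} with the constant $C\rho_*^{-\gamma}$ times the Ahlfors factor.

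For $N\geq1$ I would use Lemma \ref{transition} to linearize. For $\bx\in B_N$, $\Vert H(\bx)\bv\Vert$ is comparable to $\Vert u_{(\beta^{l-k}\bx'_{\le t},\ldots)}H(\tilde\bx_N)\bv\Vert$. So the supremum over $B_N$ becomes a supremum of a polynomial in $\bx'$ of degree at most $d$, expanded via $\bv=\bv_++\bv_-$. The $\be_0$-free part $\bv_-$ of $H(\tilde\bx_N)\bv$ is unchanged by $u$, while the $\be_0$-part picks up expansion factors $\beta^{l-k}r$. I would then argue by cases. If $\Vert (H(\tilde\bx_N)\bv)_+\Vert$ is not too small, the expansion by $\beta^{l-k}r_0\beta^N$ (large since $k-l-N\geq m+\tau$) makes the supremum at least $1$. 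Otherwise the vector is nearly contained in the $\be_0$-free subspace. Then hypothesis \eqref{removed-1} rules out that it is short after the extra $d_m$ factor (which contracts $\bv_-$ only by $\beta^{m\cdot(\cdot)/(n+1)}$). Hypothesis \eqref{removed-2} rules out the alternative where rewinding $a_h$ shrinks it; the quantity $\tau$ is precisely the exponent balancing these two. Minkowski's theorem converts a short wedge into a short vector, contradicting $K_{\sqrt{n+1}\beta^{m/(n+1)}r_0^{-1}}$ or $K_{\sqrt2 n\beta^{\tau/n}}$. This case analysis is the main obstacle: tracking the exponents of $b$ and $\beta$ so that each of the two membership conditions kills exactly one regime.

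Finally, \eqref{nbhd-measure-bound} follows from the containment $B(A^{k,l,N}_\epsilon,r)\cap X\subset A^{k,l}_{\epsilon'}(B')$, with $B'$ the $r$-enlargement of $B_N$. Indeed, moving $\bx$ by at most $r$ changes $d_la_kH_0(\bx)$ by left multiplication by $u_{(\beta^{l-k}\bx',b^{(1+w_{t+1})k}\bx',0)}$ up to a near-identity factor (Lemma \ref{transition}, second version for $N=0$ using $r\leq\beta^{k-l}r_0$). This distorts lengths by at most $2(1+\max\{\beta^{l-k},b^{(1+\tilde w_{t+1})k}\}r)$. We then apply the first part on $B'$, whose hypotheses persist, and use $\mu(B')\leq 3^\alpha A^2\mu(B_N)$; the factor $2^\gamma$ in $C'$ accounts for this.
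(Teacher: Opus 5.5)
Your treatment of the case $N=0$ matches the paper: Propositions \ref{nondeg-global} and \ref{affine-global} exclude alternative (2) of Theorem \ref{analytic-nde}, and $d_la_k$ satisfies \eqref{eq: cond on lambda} because $k\geq sl$. Your neighbourhood bound also matches, via the containment $B(A^{k,l,N}_\epsilon,r)\subset A^{k,l}_{\epsilon'}(B(\bx_N,2\beta^Nr_0))$ and Lemma \ref{transition}.

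The gap is in the heart of the lemma, the case $N\geq1$. There you must derive $\sup_{\Vert\bx-\tilde\bx_N\Vert\leq\beta^Nr_0}\Vert d_la_kH_0(\bx)\bv\Vert\geq\frac12$ from \eqref{removed-1} and \eqref{removed-2}, and your sketch rests on the wrong dichotomy. Put $\bv'=d_la_kH_0(\tilde\bx_N)\bv=\bv'_1\wedge\cdots\wedge\bv'_j$. Since $u_{\br}\be_0=\be_0$, the unipotent fixes every wedge containing $\be_0$. So $\bv'_+$ is invariant, and the only new terms are of the form $\be_0\wedge(\cdots)$, built from $\bv'_-$. Hence the expansion by $\beta^{l-k}\beta^Nr_0$ has nothing to do with $\Vert\bv'_+\Vert$, and your case ``$\Vert\bv'_+\Vert$ not too small, so the expansion makes the supremum at least $1$'' is not an argument. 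What the expansion actually yields, under the contradiction hypothesis and testing $x'_{i_0}=\pm\beta^Nr_0$ for one $i_0\leq t$, is a bound on the contraction $\tilde\bv=\sum_i(-1)^{i+1}v'_{i,i_0}\bigwedge_{i'\neq i}\bv'_{i'}$, namely $\Vert\tilde\bv_-\Vert<\beta^{k-l-N}r_0^{-1}$.

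The missing idea is how to turn this into a violation of one of the two hypotheses. The paper takes, by Minkowski, $\bv'_1$ with $\Vert\bv'_1\Vert<\sqrt n$. It then splits on the coordinates $v'_{1,i}$, $1\leq i\leq t$, which are exactly the directions that $d_m$ \emph{expands}. So your remark that $d_m$ contracts $\bv_-$ is false there. This is also why the split must be made on a single short vector: \eqref{removed-1} is a statement about single lattice vectors.

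\begin{itemize}
\item If all $|v'_{1,i}|<\beta^mr_0^{-1}$, then $\Vert d_m\bv'_1\Vert<\sqrt{n+1}\beta^{\frac{tm}{n+1}}r_0^{-1}$, contradicting \eqref{removed-1}.
\item If some $|v'_{1,i_0}|\geq\beta^mr_0^{-1}$, the identity $\bv'_1\wedge\tilde\bv=v'_{1,i_0}\bv'$ gives $\Vert\bv'_-\Vert<\sqrt n\beta^{k-l-m-N}$. Now $a_{-h}$ multiplies $\Vert\bv'_-\Vert$ by at most $\beta^{-h/(1+w_1)}$, and multiplies $\Vert\bv'_+\Vert$ (which is $<1$) by at most $b^{-w_nh}$. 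Hence $\Vert d_la_{k-h}H_0(\tilde\bx_N)\bv\Vert<\sqrt{2n}\beta^\tau$, and Minkowski contradicts \eqref{removed-2}.
\end{itemize}

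This is where $\tau$ comes from. Without this step, alternative (2) is not excluded for $N\geq1$.

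A smaller point concerns the constants. For $N\geq1$ the threshold is $\rho=\frac12$, not $\rho_*$, and Theorem \ref{analytic-nde} is applied on $B(\tilde\bx_N,3\beta^Nr_0)$. That is the source of the factor $3^\alpha2^\gamma$ in $C'$, with $2^\gamma=\rho^{-\gamma}$. The distortion in the neighbourhood estimate is already absorbed into $\epsilon'$.
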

 \begin{proof}
     Write   $\bx=\tilde{\bx}_N+\bx'$. For the lower-bound argument we take  $\Vert\bx'\Vert\leq\beta^Nr_0$. Moreover, by \cite[Equation (35)]{BNY21} and the fact that $\tilde{\bx}_N\in B_N=X\cap B(\bx_N,\beta^Nr_0),$ we have\begin{equation}
         B(\bx_N,2\beta^Nr_0)\subset B(\tilde{\bx}_N,3\beta^Nr_0)\subset B(\bx_N,4\beta^Nr_0)\subset B(\bx_0,5r_0)\subset B(\bz).\label{chain-of-balls}
     \end{equation}

     We first deal with the case assuming \eqref{removed-1}, \eqref{removed-2} and \eqref{quin-restriction}. We claim that for any  $1\leq j\leq n$ and any nonzero primitive $\bv\in\bigwedge^j\Z^{n+1}$
      \begin{equation}\label{sup-lower-bound}
         \sup_{\Vert\bx'\Vert\leq\beta^Nr_0}\Vert d_la_kH_0(\bx)\bv\Vert\geq\frac{1}{2}.
     \end{equation}

     Assuming toward contradiction that for some non-zero $\bv=\bv_1\wedge...\wedge\bv_j\in\bigwedge^j(\Z^{n+1})$, $\sup_{\Vert\bx'\Vert\leq\beta^Nr_0}\Vert d_la_kH_0(\bx)\bv\Vert<\frac{1}{2}$, then by \eqref{quin-restriction} and \eqref{transition}, and denoting $$\bv'=d_la_kH_0(\tilde{\bx}_N)\bv,\bv_i'=d_la_kH_0(\tilde{\bx}_N)\bv_i,1\leq i\leq j,$$ we have \begin{equation}
     \label{reduced-sup-bound}
         \sup_{\Vert\bx'\Vert\leq\beta^Nr_0}\Vert u_{(\beta^{l-k}x_1',...,\beta^{l-k}x_t',b^{(1+w_{t+1})k}x_{t+1}',...,b^{(1+w_d)k}x_d',0,...,0)}\bv'\Vert<1.
     \end{equation}

     By applying Minkowski's convex body theorem, we may assume without loss of generality that \begin{equation}\Vert\bv_1'\Vert<\sqrt{j}\leq\sqrt{n}\leq r_0^{-1}\label{v1-bound}.\end{equation}

     If $|v_{1,i}'|<\beta^mr_0^{-1},$ for all $1\leq i\leq t,$ where $\bv_1'=(v_{1,+}',v_{1,1}',...,v_{1,n}'),$ then using \eqref{v1-bound} we get that  \begin{equation}
         \Vert d_{l+m}a_kH_0(\tilde{\bx}_N)\bv_1\Vert=\Vert d_m\bv_1'\Vert<\sqrt{n+1}\beta^{\frac{tm}{n+1}}r_0^{-1}\leq \sqrt{n+1}\beta^{\frac{m}{n+1}}r_0^{-1},
     \end{equation} 
     and since $\bv_1\in\Z^{n+1}$ is non-zero, it contradicts \eqref{removed-1}.

     If there is some $1\leq i_0\leq t$ such that   $|v_{1,i_0}'|\geq\beta^mr_0^{-1}$, then, since  by \eqref{def-of-matrix} we have  $1\leq i_0\leq t\leq d$,  we may use the form $\bx'=(0,0,...,0,x_{i_0}',0,...,0)$, that is, $i_0-$th entry is the only non-zero entry. In this case, we define
       \begin{equation}
         \tilde{\bv}=\sum_{i=1}^j(-1)^{i+1}v_{i,i_0}'\bigwedge_{i'\neq i}\bv_{i'}'
         \label{v-tilde}.
     \end{equation} 

     Then by \cite[Proposition 24]{BNY21}, we have\begin{equation}
         u_{(\beta^{l-k}x_1',...,\beta^{l-k}x_t',b^{(1+w_{t+1})k}x_{t+1}',...,b^{(1+w_d)k}x_d',0,...,0)}\bv'=\bv'+\beta^{l-k}x_{i_0}'\be_{+}\wedge\tilde{\bv}
     \end{equation}
     for all $\bx'=(0,0,...,0,x_{i_0}',0,...,0).$ Then by \eqref{reduced-sup-bound}, we have $\Vert \bv'\pm\beta^{l+N-k}r_0\be_{+}\wedge\tilde{\bv}\Vert<1$ when $x_{i_0}'=\pm\beta^Nr_0.$ Therefore, by  the triangle  inequality, we have  \begin{equation}
         \Vert 2\beta^{l+N-k}r_0\be_{+}\wedge\tilde{\bv}\Vert\leq \Vert \bv'+\beta^{l+N-k}r_0\be_{+}\wedge\tilde{\bv}\Vert+\Vert \bv'-\beta^{l+N-k}r_0\be_{+}\wedge\tilde{\bv}\Vert<2.
     \end{equation} 

     Simplifying it, we get
     \begin{equation}\label{norm-bound}
         \Vert \be_+\wedge \tilde{\bv}\Vert<\beta^{k-l-N}r_0^{-1}.
     \end{equation}
     In particular, since $\be_+\wedge\tilde{\bv}_+=0$, we may focus on $\be_+\wedge\tilde{\bv}_-.$ Moreover, notice that  $\Vert \be_+\wedge\tilde{\bv}_-\Vert=\Vert\tilde{\bv}_-\Vert,$  \eqref{norm-bound} can be rewritten as
     \begin{equation}\label{v--bound}
         \Vert\tilde{\bv}_-\Vert<\beta^{k-l-N}r_0^{-1}.
     \end{equation}
     On the other hand, noticing $\bv_1'\wedge\tilde{\bv}=v_{1,i_0}'\bv'$, by \eqref{v1-bound} we can again transform \eqref{v--bound} into  \begin{equation}
         \Vert\bv_-'\Vert\leq|v_{1,i_0}'|^{-1}\Vert\bv_{1,-}'\Vert\Vert\tilde{\bv}_{-}\Vert<\sqrt{n}\beta^{k-l-m-N}.
     \end{equation} 
     Applying \cite[Proposition 24]{BNY21} to the equation above we have\begin{equation}\label{v'-bound-1}
         \Vert a_{-h}\bv_-'\Vert<\sqrt{n}\beta^{k-l-m-N-\frac{h}{1+w_1}}.
     \end{equation}
     Then by noticing $\Vert \bv_+'\Vert\leq\Vert\bv'\Vert<1$ from \eqref{reduced-sup-bound} and again applying \cite[Proposition 24]{BNY21}, we get\begin{equation}\label{v'-bound-2}
         \Vert a_{-h}\bv'_+\Vert<b^{-w_nh}.
     \end{equation}
     Now by combining \eqref{quin-restriction}, \eqref{v'-bound-1} and \eqref{v'-bound-2}, and using  orthogonality , we obtain that  \begin{equation}
         \Vert d_{l}a_{k-h}H_0(\tilde{\bx}_N)\bv\Vert=\Vert a_{-h}\bv'\Vert<\sqrt{2n}\max\{\beta^{k-l-m-N-\frac{h}{1+w_1}},b^{-w_nh}\}=\sqrt{2n}\beta^\tau.
     \end{equation} 
     Then applying Minkowski's convex body theorem, there exists a non-zero vector $\bv_0\in d_{l}a_{k-h}H_0(\tilde{\bx}_N)\Z^{n+1}$ such that\begin{equation}
         \Vert\bv_0\Vert<\sqrt{j}(\sqrt{2n}\beta^\tau)^{\frac{1}{j}}\leq\sqrt{2}n\beta^{\frac{\tau}{n}},
     \end{equation}
     violating \eqref{removed-2}.

     Now, \eqref{sup-lower-bound} holds, and we can apply Theorem \ref{analytic-nde} with $\rho=\frac{1}{2}$ and $B(\tilde{\bx}_N,3\beta^Nr_0)$, to obtain that\begin{equation}\label{measure-bound-1}
         \mu(A_{\epsilon}^{k,l}(B(\tilde{\bx}_N,3\beta^Nr_0)))\leq C2^\gamma\epsilon^\gamma\mu(B(\tilde{\bx}_N,3\beta^Nr_0)).
     \end{equation}
     Then by Definition \ref{ahlfors} we get\begin{equation}\label{measure-bound-2}
         \mu(B(\tilde{\bx}_N,3\beta^Nr_0))\leq A(3\beta^Nr_0)^{\alpha}\leq 3^{\alpha}A^2\mu(B_N).
     \end{equation}
     Hence by \eqref{chain-of-balls}, \eqref{measure-bound-1} and \eqref{measure-bound-2}, we have\begin{equation}\label{main-measure-bound}
         \mu(A_{\epsilon}^{k,l}(B(\bx_N,2\beta^Nr_0)))\leq\mu(A_{\epsilon}^{k,l}(B(\tilde{\bx}_N,3\beta^Nr_0)))\leq C'\epsilon^{\gamma}\mu(B_N).
     \end{equation}
     Since trivially $A_{\epsilon}^{k,l,N}\subset A_{\epsilon}^{k,l}(B(\bx_N,2\beta^Nr_0))$, \eqref{main-measure-bound} implies \eqref{pure-measure-bound} immediately. 

     Secondly, assuming $N=0$ and $k\geq sl\geq \frac{1+w_1}{w_1}l$, we notice that by Definition \ref{ahlfors}\begin{equation}
         \mu(B(\bx_N,2\beta^N r_0))\leq A^22^\alpha\mu(B_N),\label{observation-1}
     \end{equation}and $d_la_k=\text{diag}(b_+,b_1,...,b_n)$ satisfies $k\geq sl$, so we can apply Proposition \ref{nondeg-global}, or respectively Proposition \ref{affine-global}  . For the latter, put $J=\diag(-1,1,\ldots,1)$. Then $H_0(\bx)=Ju_1^\star(\bx)J$, so the same lower bound holds for $H_0$. We then apply Theorem \ref{analytic-nde} with $\rho_*$  to obtain \eqref{main-measure-bound} from \eqref{observation-1}, which immediately implies \eqref{pure-measure-bound}.

     To see \eqref{nbhd-measure-bound}  in  both cases,  let  $\bx\in A_{\epsilon}^{k,l,N}$ . Then there exists a nonzero  $\bv\in \Z^{n+1}$ such that $\Vert d_la_kH_0(\bx)\bv\Vert<\epsilon.$ For some $\by=\bx+\bx'$ with $\Vert\bx'\Vert<r$, we can obtain from \cite[Proposition 24]{BNY21} that \begin{equation}\begin{aligned}
         & u_{(\beta^{l-k}x_1',...,\beta^{l-k}x_t',b^{(1+w_{t+1})k}x_{t+1}',...,b^{(1+w_d)k}x_d',0,...,0)}d_la_kH_0(\bx)\bv\\
         &=d_la_kH_0(\bx)\bv+\langle\tilde{\bv},d_la_kH_0(\bx)\bv\rangle\be_+,
     \end{aligned}\end{equation}
     where $\tilde{\bv}=(0,\beta^{l-k}x_1',...,\beta^{l-k}x_t',b^{(1+w_{t+1})k}x_{t+1}',...,b^{(1+w_d)k}x_d',0,...,0)$. Using 
Lemma \ref{transition} 
(its alternative case when $N=0$), the triangle  inequality, and   Cauchy--Schwarz , we have \begin{equation}\begin{split}
         \Vert d_la_kH_0(\by)\bv\Vert&\leq 2\Vert u_{(\beta^{l-k}x_1',...,\beta^{l-k}x_t',b^{(1+w_{t+1})k}x_{t+1}',...,b^{(1+w_d)k}x_d',0,...,0)}d_la_kH_0(\bx)\bv\Vert\\&\leq2\Vert d_la_kH_0(\bx)\bv\Vert+2|\langle\tilde{\bv},d_la_kH_0(\bx)\bv\rangle|\leq 2(1+\Vert\tilde{\bv}\Vert)\Vert d_la_kH_0(\bx)\bv\Vert.
         \end{split}
     \end{equation}
     Note that $\Vert\tilde{\bv}\Vert\leq\max\{\beta^{l-k},b^{(1+\tilde{w}_{t+1})k}\}r$, since if  $t=d$, then  $$\tilde{\bv}=(0,\beta^{l-k}x_1',...,\beta^{l-k}x_d',0,...,0).$$ Therefore,  \begin{equation}
         \Vert d_la_kH_0(\by)\bv\Vert<2(1+\max\{\beta^{l-k},b^{(1+\tilde{w}_{t+1})k}\}r)\epsilon=\epsilon'.
     \end{equation} 
     Moreover, since $\bx\in A_{\epsilon}^{k,l,N}\subset B_N$ and $r\leq\beta^Nr_0,$ we have $\by\in B(\bx_N,2\beta^Nr_0)$, so the  inequality above gives  $B(A_{\epsilon}^{k,l,N},r)\subset A_{\epsilon'}^{k,l}(B(\bx_N,2\beta^Nr_0))$, then \eqref{main-measure-bound} with $\epsilon$ replaced by $\epsilon'$ implies \eqref{nbhd-measure-bound}.
     \end{proof}

\section{Winning strategy}


This section aims to provide a winning strategy using the measure bound in the previous section.

  Put $\tilde{w}_{t+1}=w_{t+1}$ if $t<d$, and $\tilde{w}_{t+1}=0$ if $t=d$. Denote\begin{equation}\label{constant-s}
    s:=\max\left\{5,\left\lceil\frac{1+w_1}{w_1-\tilde{w}_{t+1}}\right\rceil,\left\lceil\frac{2(1+w_1)+1}{w_1}\right\rceil,s_0\right\},
\end{equation} 
where $s_0$ is   a sufficiently large integer  so that Proposition \ref{affine-global} holds; in the nondegenerate case take $s_0=1$. Let
 \begin{equation}
    \eta:=\min\left\{\frac{1}{4(n+1)},\frac{w_ns}{2n(1+w_1)},\frac{w_1s-2(1+w_1)}{2n(1+w_1)},\frac{\alpha}{\gamma}\right\},\label{constant-eta}
\end{equation}
\begin{equation}
    \label{constant-alpha-prime}
    \alpha':=\alpha-\frac{\gamma\eta}{4s}.
\end{equation}

To begin with, denote  \begin{equation}
    A_{1,i}:=\bigcup_{N'\geq0,l\geq\frac{N'}{s},N'+(s-1)l=i+1}\left\{\bx\in B_0:d_la_{N'+sl}z_{D\varphi(\bx)}u_{\phi(\bx)}\Z^{n+1}\notin K_{\beta^{\eta l}}\right\}.
\end{equation} 
Next, for $N\geq1,$ denote
 \begin{equation}
    A_{N+1,i}:=\left\{\bx\in B_N:d_la_{N+sl}z_{D\varphi(\bx)}u_{\phi(\bx)}\Z^{n+1}\notin K_{\beta^{\eta l}}\right\}
\end{equation} 
if  $i=(s-1)l-1$  for some integer $0<l<\frac{N}{s}$, and we define $A_{N+1,i}:=\emptyset$ otherwise.

Now define
\begin{equation}
    \tilde{A}_{N+1,i}:=A_{N+1,i}\setminus\bigcup_{0\leq N'<N,i'\geq0}A_{N'+1,i'}.
\end{equation}

 Let $\mathcal A_{N+1,i}$ be the cover of $\tilde{A}_{N+1,i}$ by balls in $X$ of radius $\beta^{N+i+1}r_0$ supplied by the covering lemma. Then the outcome of the game $\bx$ satisfies \begin{equation}
    d_la_{N+sl}z_{D\varphi(\bx)}u_{\phi(\bx)}\Z^{n+1}\in K_{\beta^{\eta l}}
\end{equation} 
for all $N\geq0,l\geq1$. Using the case $l=1,$ and \eqref{conjugate-az}, we have  \begin{equation}
    a_Nu_{\phi(\bx)}\Z^{n+1}=z_{O(1)}d_{-1}a_{-s}(d_1a_{N+s}z_{D\varphi(\bx)}u_{\phi(\bx)}\Z^{n+1})\in z_{O(1)}d_{-1}a_{-s}K_{\beta^\eta},
\end{equation}  
where the  right-hand side  is a bounded set that is independent of $N.$ Thus by Dani's correspondence  (in its dual form, see \mbox{
\cite[Equation (25)]{BNY21}}\hskip0pt
), $\bx\in\phi^{-1}(\mathbf{Bad}(\bw))$ .

Now it suffices to show that this strategy is legal. In particular, by \cite[Lemma 22]{BNY21}, it suffices to give an upper bound for  $\mu(B(\tilde{A}_{N+1,i},\frac{1}{3}\beta^{N+i+1}r_0)).$

In the case $N=0$, we have  \begin{equation}
    B(\tilde{A}_{1,i},\frac{1}{3}\beta^{N+i+1}r_0)=\bigcup_{N'\geq0,l\geq\frac{N'}{s},N'+(s-1)l=i+1}B(A_{\beta^{\eta l}}^{N'+sl,l,0},\frac{1}{3}\beta^{N+i+1}r_0).
\end{equation} 

Fix $i\geq0$. For every $N',l$ such that $N'\geq0,l\geq\frac{N'}{s},N'+(s-1)l=i+1$, we can apply Lemma \ref{main-lemma} with quintuple  $(0,N'+sl,l,0,0),\epsilon=\beta^{\eta l},r=\frac{1}{3}\beta^{N+i+1}r_0$ to get $\mu(B(A_{\beta^{\eta l}}^{N'+sl,l,0},\frac{1}{3}\beta^{N+i+1}r_0))\leq C'\epsilon'^\gamma\mu(B_0)\leq C'4^\gamma\beta^{\gamma\eta l},$  where the last inequality follows from the fact that $\mu(B_0)\leq1,$ and  $$\epsilon'=2(1+\max\{\beta^{-(N'+(s-1)l)},b^{(1+\tilde{w}_{t+1})(N'+sl)}\}\frac{1}{3}\beta^{N+i+1}r_0)\beta^{\eta l}\leq4\beta^{\eta l}$$  by \eqref{constant-s}.

      \begin{equation}\begin{split}\label{bound-1}
        \mu(B(\tilde{A}_{1,i},\frac{1}{3}\beta^{N+i+1}r_0))&\leq\sum_{N'\geq0,l\geq\frac{N'}{s},N'+(s-1)l=i+1}C'4^\gamma\beta^{\gamma\eta l}\\&\leq\sum_{l\geq\frac{i+1}{2s-1}}C'4^\gamma\beta^{\gamma\eta l}\leq\frac{C'4^\gamma}{1-\beta^{\gamma\eta}}\left(\beta^{\frac{\gamma\eta}{2s-1}}\right)^{i+1}\\&\leq C'4^{\gamma+1}\beta^{\frac{\gamma\eta(i+1)}{2s-1}}\beta^{\alpha N}
        \end{split}
    \end{equation} 
since $N=0.$

In the case $N\geq1,$ without loss of generality, $\tilde{A}_{N+1,i}\neq\emptyset.$ Then by construction, picking a point $\tilde{\bx}_N\in \tilde{A}_{N+1,i}\subset B_N,$ we have $\tilde{\bx}_N\notin\bigcup_{0\leq N'<N,i'\geq0}A_{N'+1,i'}.$ In particular,  \begin{align*}
     &\zblue{\tilde{\bx}_N\not\in} A_{1,N+(s-2)\ell-1}\cup A_{1,N-\ell-1}  & \text{ if } &  & \frac{N}{2s} \le  \ell < \frac{N}{s}\,,& \\
     &\zblue{\tilde{\bx}_N\not\in}  A_{1,N+(s-2)\ell-1} \cup A_{N+1-s\ell,(s-1)\ell-1}  & \text{ if } & & \frac{N}{3s}  \le  \ell < \frac{N}{2s}\,, &\\
     &\zblue{\tilde{\bx}_N\not\in} A_{N+1-s\ell,2(s-1)\ell-1}\cup A_{N+1-s\ell,(s-1)\ell-1}& \text{ if } & & \phantom{\frac{N}{3s} < {}} \ell < \frac{N}{3s}\,.&
 \end{align*} 
 A routine inspection of each of the sets above gives that
 \begin{equation}\label{vb3001}
d_{2\ell}a_{N+s\ell}H_0(\tilde{\bx}_N)\bbZ^{n+1} 	 \in K_{\beta^{2\eta\ell}} \subseteq K_{\sqrt{n+1}\beta^{\frac{\ell}{n+1}}r_0^{{-1}}}\,,
\end{equation} 
and  \begin{equation}\label{vb3002}
d_{\ell}a_{N}H_0(\tilde{\bx}_N)\bbZ^{n+1}  		 \in K_{\beta^{\eta\ell}} \subseteq K_{{\sqrt{2}n}\beta^{\frac{\tau}{n}}}\,,
\end{equation} 
where

\[
\zblue{\textstyle
\tau=\min\left\{k-\ell-m-N-\frac{h}{1+w_1},\frac{hw_n}{1+w_1}\right\}=\min\left\{ (s-2)\ell - \frac{s\ell}{1+w_1},\frac{s\ell w_n}{1+w_1}\right\}>0\,.}
\]

The containments on the  right-hand side follow by taking  $\beta$  sufficiently small, as in \mbox{
\cite[Equation (66)]{BNY21}}\hskip0pt
.

The final prerequisite to apply Lemma \ref{main-lemma} is $N+sl<2N$ and $l<N$, which clearly holds by the fact that $l<\frac{N}{s}.$

Therefore, by applying Lemma \ref{main-lemma} with quintuples $(sl,N+sl,l,l,N),\epsilon=\beta^{\eta l},r=\frac{1}{3}\beta^{N+(s-1)l}r_0,$ we have $\epsilon'\leq4\beta^{\eta l}$ and thus,  \begin{equation}\begin{split}\label{bound-2}
        \mu(B(\tilde{A}_{N+1,i},\frac{1}{3}\beta^{N+i+1}r_0))&\leq C'4^{\gamma}\beta^{\gamma\eta l}\mu(B_N)\leq C'4^{\gamma}\beta^{\gamma \eta l}A(\beta^Nr_0)^{\alpha}\\&\leq C'Ar_0^{\alpha}4^{\gamma}\beta^{\frac{\gamma\eta (i+1)}{2(s-1)}}\beta^{\alpha N}. 
        \end{split}
    \end{equation} 
 To finish, set $c=\gamma\eta/(4s), \alpha'=\alpha-c$ and
$$D=\max\{C'4^{\gamma+1},C'Ar_0^\alpha4^\gamma\}.$$
The bounds \eqref{bound-1} and \eqref{bound-2} above give
$$\mu\bigl(B(\tilde A_{N+1,i},\tfrac13\beta^{N+i+1}r_0)\bigr)\leq D\beta^{2c(i+1)}\beta^{\alpha N}.$$
Lemma \ref{covering} therefore gives
$$\#\mathcal A_{N+1,i}\leq A3^\alpha r_0^{-\alpha}D\,\beta^{c(i+1)}\beta^{-(\alpha-c)(i+1)}\leq\beta^{-\alpha'(i+1)},$$
provided $\beta$ is small enough that
$$A3^\alpha r_0^{-\alpha}D\,\beta^{c}\leq1.$$


Thus we reach Cantor winning as in Definition \ref{cantor game}, which finishes the proof.

 \section{Appendix}\label{sec: Appendix}

We recall the following lemma and proposition from \cite{BNY21}.
\begin{definition}\label{def-hyp-diff}
    A non-empty closed set $K\subset\R^d$ is called hyperplane diffuse   if there exist  $\beta>0,r_0>0$ such that  for any $\bx\in K$, $0<r\leq r_0$ and any hyperplane $H\subset\R^d$,  we have 
\[
K\cap(B(\bx,r)\setminus B(H,\beta r))\neq\emptyset.
\]

\end{definition}

\begin{proposition}{\cite[Proposition 14]{BNY21}}\label{construction-of-mu}
    If $K$ is hyperplane diffuse, then there exists  a nonzero  Ahlfors-regular absolutely decaying measure $\mu$ so that $\supp\mu\subset K.$
\end{proposition}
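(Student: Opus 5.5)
The statement to prove is Proposition \ref{construction-of-mu}: a hyperplane diffuse closed set $K$ carries a nonzero Ahlfors regular, absolutely decaying measure. I would build $\mu$ as the natural measure on a self-similar-type Cantor subset of $K$, using diffuseness at every scale. This is essentially the construction in \cite{BFKRW} and \cite{BNY21}.

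\textbf{Construction of the tree.} Fix the diffuseness constants $\beta, r_0$ from Definition \ref{def-hyp-diff}, and set $\beta' = \beta/10$.

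Start from a ball $B(\bx_0,r_0)$ with $\bx_0\in K$. Given a ball $B(\bx,r)$ with centre in $K$, I choose children among the balls $B(\by,\beta' r)$ with $\by\in K\cap B(\bx,r/2)$.

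The key claim is that one can select a fixed number $M\geq d+1$ of children, pairwise $3\beta' r$-separated, and not all close to any one hyperplane. More precisely, I want: for every hyperplane $H$, at least one child centre lies outside $B(H,\beta r/2)$.

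To get this, apply diffuseness iteratively. Pick $\by_1$. Then pick $\by_{j+1}$ outside the $\beta r/2$-neighbourhood of the affine hull of $\by_1,\dots,\by_j$; this hull lies inside some hyperplane. After $d+1$ steps the centres are "uniformly affinely independent". Enlarge $M$ to a fixed count, for example by adding extra well-separated points, so that every node has exactly $M$ children. Then $\mu$ is defined by giving each $N$-th generation ball mass $M^{-N}$, and its support lies in $K$ because $K$ is closed.

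\textbf{Ahlfors regularity.} Let $\alpha=\log M/\log(1/\beta')$. Because children are separated and nested inside their parents, a ball $B(\bz,\rho)$ with $\bz\in\supp\mu$ and $\beta'^{N+1}r_0<\rho\le\beta'^N r_0$ meets only boundedly many generation-$N$ balls. It also contains a whole generation-$(N+1)$ ball. This gives $\mu(B(\bz,\rho))\asymp\rho^\alpha$.

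\textbf{Absolute decay.} This is the main obstacle. Take a ball $B(\bx,r)$ and a neighbourhood $B(H,r')$ of a hyperplane.

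By the affine-independence property, in every node ball of radius $\rho$ at least one child avoids $B(H,c\rho)$, with $c\approx\beta/4$. Hence, passing from one generation to the next, the proportion of mass that can lie in $B(H,r')$ gets multiplied by at most $(1-1/M)$. This holds at every generation whose scale $\rho$ satisfies $c\rho\gtrsim r'$, and the number of such generations is about $\log(r/r')/\log(1/\beta')$.

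Iterating gives
\[
\mu(B(H,r')\cap B(\bx,r))\ll(1-1/M)^{\log(r/r')/\log(1/\beta')}\mu(B(\bx,r)),
\]
which is the required bound with $\delta=-\log(1-1/M)/\log(1/\beta')$. Ahlfors regularity is used to compare $B(\bx,r)$ with the boundedly many node balls that meet it.

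The delicate point is the hyperplane-avoiding selection of children. The child has to avoid $B(H,c\rho)$ entirely, not just have its centre outside. This needs $\beta'\ll\beta$, which is exactly why I chose $\beta'=\beta/10$.
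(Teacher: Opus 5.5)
The paper gives no proof of this statement; it is quoted from \cite{BNY21}. Your tree construction with the natural measure is the standard route, but the step you call delicate is wrong as written. The greedy rule ($\by_{j+1}$ at distance $\ge\beta r/2$ from $\mathrm{aff}(\by_1,\dots,\by_j)$) does not ensure that, for every hyperplane $H$, some centre lies outside $B(H,\beta r/2)$ once $d\ge2$. In $\R^2$ take $\by_1=(0,0)$, $\by_2=(\beta r/2,0)$, $\by_3=(r/4,\beta r/2)$. These obey your rule, yet the line through $\by_1$ and $\by_3$ passes within $\beta^2 r$ of $\by_2$. Since $\beta' r=\beta r/10>\beta^2 r$ for $\beta<1/10$, all three child balls meet that line. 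So no child avoids $B(H,r')$ for any $r'>0$, and your $(1-1/M)$ mass-loss step, hence the decay argument, fails at such nodes.

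What the greedy rule does give is a volume bound. The simplex on $\by_1,\dots,\by_{d+1}$ has volume $\ge(\beta r/2)^d/d!$, while a simplex inside $B(H,\varepsilon)\cap B(\bx,r/2)$ has volume $\ll_d\varepsilon r^{d-1}$. Hence $\max_i\mathrm{dist}(\by_i,H)\ge c_d\beta^d r$, and the example shows a loss of this order really occurs. The margin is of size $\beta^d r$, not $\beta r$.

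The repair is to choose $\beta'\le\min\{\beta/6,\ c_d\beta^d/4\}$ instead of $\beta/10$. Then:
\begin{itemize}
\item the centres are still pairwise at least $\beta r/2\ge3\beta' r$ apart, and children are still nested;
\item for every $H$ the child around the farthest centre avoids $B(H,c_d\beta^d r/2)$;
\item your decay argument runs with $c=c_d\beta^d/2$ and $\delta=-\log(1-1/M)/\log(1/\beta')$.
\end{itemize}
Alternatively, you can get a margin $\asymp\beta r$ from a finite net of hyperplanes as in Lemma~\ref{axiom}, at the cost of padding every node to a uniform branching number.

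The greedy rule always produces exactly $d+1$ children. So take $M=d+1$ and drop the unjustified step of ``adding extra well-separated points''.

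One small slip in the Ahlfors lower bound. For $\beta'^{N+1}r_0<\rho\le\beta'^{N}r_0$, the generation-$(N+1)$ ball containing $\bz$ need not lie inside $B(\bz,\rho)$. Use the generation-$(N+2)$ ball instead, which does lie inside because $2\beta'<1$.
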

\begin{lemma}{\cite[Lemma 15]{BNY21}}\label{axiom}
    For any $\beta>0,$ there  exist  $\beta'\in(0,\beta)$ and  an  integer $N$ so that for every ball $B=B(\bx,\rho)\subset\R^d,$ there is a collection of at most $N$ hyperplanes $\mathcal H_B$ so that for every hyperplane $H'$ there is $H\in\mathcal H_B$ for which 
\[
B(\bx,\rho)\cap B(H',\beta'\rho)\subset B(H,\beta\rho).
\]

\end{lemma}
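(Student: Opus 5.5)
The plan is a compactness argument after normalizing the ball. Affine maps $\by\mapsto \bx+\rho\by$ send hyperplanes to hyperplanes and $B(H,s)$ to $B(\bx+\rho H,\rho s)$. So it suffices to prove the statement for $B=B(\mathbf 0,1)$ with $\beta'$ and $N$ depending only on $\beta$ and $d$, and then transport the collection $\mathcal H_{B(\mathbf0,1)}$ to an arbitrary ball $B(\bx,\rho)$. This transport gives the same cardinality bound and turns the inclusion $B(\mathbf 0,1)\cap B(H',\beta')\subset B(H,\beta)$ into the desired one at scale $\rho$. I will take $\beta'=\beta/2$; replacing $\beta$ by $\min\{\beta,1\}$ if needed costs nothing.

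Next I parametrize the relevant hyperplanes. Every hyperplane can be written as $H_{\ba,c}=\{\by:\ba\cdot\by=c\}$ with $\ba\in S^{d-1}$ and $c\in\R$, and then $\operatorname{dist}(\by,H_{\ba,c})=|\ba\cdot\by-c|$. If $B(\mathbf 0,1)\cap B(H',\beta')=\emptyset$, the inclusion holds for any $H$, so we only need $\mathcal H_B\neq\emptyset$. Otherwise some $\by$ with $\Vert\by\Vert\le 1$ satisfies $|\ba'\cdot\by-c'|\le\beta'$, hence $|c'|\le 1+\beta'\le 2$. The parameter space that matters is therefore the compact set $S^{d-1}\times[-2,2]$.

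Now choose a finite $\delta$-net $\{(\ba_j,c_j)\}_{j=1}^N$ of $S^{d-1}\times[-2,2]$ in each coordinate, with $\delta=\beta/4$. Its cardinality $N$ depends only on $d$ and $\beta$. Set $\mathcal H_B=\{H_{\ba_j,c_j}\}$. Given $H'=H_{\ba',c'}$ with $|c'|\le 2$, pick $j$ with $\Vert\ba_j-\ba'\Vert\le\delta$ and $|c_j-c'|\le\delta$. For $\by\in B(\mathbf0,1)\cap B(H',\beta')$,
\[
|\ba_j\cdot\by-c_j|\le|\ba'\cdot\by-c'|+\Vert\ba_j-\ba'\Vert\,\Vert\by\Vert+|c_j-c'|\le \tfrac{\beta}{2}+\tfrac{\beta}{4}+\tfrac{\beta}{4}=\beta,
\]
so $\by\in B(H_{\ba_j,c_j},\beta)$, which proves the normalized statement.

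There is no real obstacle here. The only point needing care is the reduction to the bounded parameter set, namely discarding hyperplanes far from the ball, so that compactness yields a finite $N$ uniform in the ball. One also has to be consistent about open versus closed neighbourhoods; using non-strict inequalities throughout and $\beta'<\beta$ handles either convention.
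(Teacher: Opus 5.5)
Your proof is correct. The paper gives no argument of its own for this lemma; it only quotes \cite{BNY21}. Your route is rescale to the unit ball, note that only hyperplanes $\{\ba\cdot\by=c\}$ with $\ba\in S^{d-1}$ and $|c|\le 2$ matter, then take a finite $\beta/4$-net in the parameters $(\ba,c)$. This is a rescaling-plus-compactness argument written out explicitly, and it yields concrete values $\beta'=\beta/2$ and $N=O_d(\beta^{-d})$.

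One small correction to your closing remark. Your chain of inequalities uses the whole budget, $\beta/2+\beta/4+\beta/4=\beta$, so $\beta'<\beta$ does not by itself supply any slack. Under a mixed convention (closed neighbourhood of $H'$, open neighbourhood of $H$) you would need a smaller mesh, e.g.\ $\delta=\beta/8$. With a consistent convention, as the paper uses, your argument works exactly as written.
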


Now we imitate the proof of  the  second part of \cite[Proposition 13]{BNY21} to finish this section.

\subsection{Proof of  Lemma \ref{equivalence1}} Suppose such $S$ is not HAW in $U.$ Then by \cite[Theorem 1.6]{FLS2014}, Bob has a winning strategy, which will be fixed for the rest of the proof. Moreover, all balls in the proof are  closed  balls in $\R^d$ .  Let $\beta$ and $B_0\cap U$ be chosen on the first move of Bob according to his winning strategy. Define $\mathcal{B}_0 = \{B_0\cap U\}$ and continue by induction to construct collections of closed balls $\mathcal{B}_n$ as follows. Given $\mathcal{B}_n$, for every $B\cap U \in \mathcal{B}_n$ let $\mathcal{H}_B$ be the collection of hyperplanes arising from Lemma \ref{axiom}. Define $\mathcal{B}_{n+1}(B)$ to be the collection of all of Bob's responses according to the winning strategy while considering the hyperplanes in $\mathcal{H}_B$ as possible moves of Alice. Note that $\mathcal{B}_{n+1}(B)$ is always nonempty. Define
\begin{equation}\label{def-K}
    K = \big(\bigcap_{n \geq 0} \bigcup_{B\cap U \in \mathcal{B}_n} B\big)\cap U.
\end{equation}
By Lemma \ref{axiom}, every $\mathbf{x} \in K$ is an outcome of the hyperplane absolute game played according to Bob's winning strategy. Therefore, $\mathbf{x} \notin S$. Since $\mathbf{x}$ is an arbitrary point of $K$, we have that $K \cap S = \varnothing$. Moreover,  by  definition, $K\subset U.$

Now by Proposition \ref{construction-of-mu}, it  remains  to verify that $K$ is hyperplane diffuse. Indeed, we will show that it is $\frac{\beta' \beta}{2}$ -hyperplane  diffuse for $\beta'$ as in Lemma \ref{axiom}. Assume $\mathbf{x} \in K$, $0 < r \leq r_0$ and $H' \subset \mathbb{R}^d$ is a hyperplane, where $r_0$ is the radius of $B_0$. Let $n$ be the unique positive integer such that
\begin{equation}\label{scale-of-r}
    2\beta^n r_0 \leq r < 2\beta^{n-1} r_0,
\end{equation}
which clearly exists since $0 < \beta < 1$. Since $\mathbf{x} \in K$, by \eqref{def-K}, there exists a ball $B = B(\mathbf{x}_0, \beta^n r_0),$ so that $B\cap U \in \mathcal{B}_n$ and $\mathbf{x} \in B$. The left hand side of \eqref{scale-of-r} implies that $B \subseteq B(\mathbf{x}, r)$. By Lemma \ref{axiom} applied with $\rho = \beta^n r_0$, there exists $H \in \mathcal{H}_B$ such that
\[
B \cap B(H', \beta' \beta^n r_0) \subset B(H, \beta^{n+1} r_0).
\]
The right hand side of \eqref{scale-of-r} implies that $\frac{\beta \beta'}{2} r < \beta' \beta^n r_0$ and hence
\[
B \cap B\left(H', \frac{\beta \beta'}{2} r\right) \subset B(H, \beta^{n+1} r_0).
\]
By the definition of $\mathcal{B}_{n+1}(B)$, there exists $B'\cap U \in \mathcal{B}_{n+1}(B)$ such that $B'\cap U \cap B(H, \beta^{n+1} r_0) = \varnothing$. Since the collections $\mathcal{B}_{n+1}(B)$ are always nonempty, by \eqref{def-K}, we have that $K \cap B'=K\cap B'\cap U \neq \varnothing$. Since $\varnothing \neq K \cap B' \subset K \cap B \subset K \cap B(\mathbf{x}, r)$ , we have $K \cap B(\mathbf{x}, r) \not\subset B(H', \frac{\beta \beta'}{2} r)$. Hence, $K \cap (B(\mathbf{x}, r) \setminus B(H', \frac{\beta \beta'}{2} r)) \neq \varnothing$. This verifies Definition \ref{def-hyp-diff} for the set $K$ and thus completes the proof.
\subsection*{Acknowledgment:}
We are grateful to David Simmons for sharing his note comparing higher rank Diophantine exponents with rank $1$. 
We did not use his method of proof, as it was not clear to us how to extend that idea to a weighted setup. LS thanks Zuo Lin, Amir Mohammadi and Lei Yang for many helpful discussions. SD thanks Victor Beresnevich for his constant support and also for their joint work \cite{BD_upcoming} that was also useful in this project. SD thanks the EPSRC grant EP/Y016769/1 and the ISI startup grant.

\subsection*{AI declaration}
The work was completed without use of any AI, which started in 2025 and was completed in the early summer of 2026. After finishing, the authors revised the paper using ChatGPT to correct minor inaccuracies. 

\bibliographystyle{abbrv}
\bibliography{inhomo_curve}

\end{document}